\documentclass[11pt]{amsart}
\usepackage{latexsym,amsmath,amssymb,amsthm,mathrsfs,amscd,enumerate,latexsym,array}
\usepackage[all]{xy}

\setlength{\topmargin}{0mm}
\setlength{\oddsidemargin}{8mm}
\setlength{\evensidemargin}{8mm}
\setlength{\textwidth}{140mm}
\setlength{\textheight}{220mm}

\theoremstyle{definition}
\newtheorem{Unity}{Unity}[section]
\newtheorem{dfn}[Unity]{Definition}
\newtheorem{rmk}[Unity]{Remark}
\newtheorem{ntt}[Unity]{Notation}
\newtheorem{exam}[Unity]{Example}
\newtheorem{case}{Case}
\newtheorem*{ack}{Acknowledgements}

\theoremstyle{plain}
\newtheorem{thm}[Unity]{Theorem}
\newtheorem{prop}[Unity]{Proposition}
\newtheorem{conj}[Unity]{Conjecture}
\newtheorem{lem}[Unity]{Lemma}
\newtheorem{cor}[Unity]{Corollary}

\begin{document}

\title[Higher order minimal families of rational curves on Fano manifolds]{Higher order minimal families of rational curves and Fano manifolds with nef Chern characters}
\author{Taku Suzuki}
\keywords{Fano manifolds, rational curves, Chern characters, covered by rational manifolds.}
\subjclass[2010]{Primary 14J45, Secondary 14C17, 14M20.}
\address{Department of Mathematics, School of Fundamental Science and Engineering, Waseda University, 3-4-1 Ohkubo, Shinjuku, Tokyo 169-8555, Japan}
\email{s.taku.1231@moegi.waseda.jp}

\maketitle

\begin{abstract}
In this paper, we investigate higher order minimal families $H_i$ of rational curves associated to Fano manifolds $X$. 
We prove that $H_i$ is also a Fano manifold if the Chern characters of $X$ satisfy some positivity conditions. 
We also provide a sufficient condition for Fano manifolds to be covered by higher rational manifolds. 
\end{abstract}

\section{Introduction}

Throughout this paper, we consider a smooth Fano manifold $X$, namely, a smooth complex projective variety $X$ with ample anti-canonical line bundle $-K_X$. 
By S.\ Mori's famous result (\cite{Mo}), $X$ is covered by rational curves, so we consider families $H_1$ of rational curves on $X$ through a fixed general point. 
If $H_1$ has minimal $(-K_X)$-degree among them, we say it is \textit{minimal}, and write $X \vdash H_1$ in this paper. 
It is known that minimal families $H_1$ give much geometrical information concerning the original manifold $X$. 
Many authors have studied the structures of them (for instance, \cite{Ko}, \cite{Ke2}, \cite{Dr}, and so on). 
The image of $H_1$ in the projectivized tangent space by the tangent map (see Definition \ref{Def2}) is called the \textit{variety of minimal rational tangents} (\textit{VMRT}). 
The theory of VMRT's has been built by J.-M.\ Hwang and N.\ Mok (see \cite{Hw} and \cite{HM}), and it has played an important role in many problems regarding higher dimensional algebraic geometry. 

If $H_1$ is also a Fano manifold, we can consider minimal families $H_2$ of rational curves on $H_1$ through a general point. 
In the same way, we  inductively define \textit{higher order minimal families of rational curves} $X \vdash H_1 \vdash \cdots \vdash H_i$. 
In this paper, we focus on higher order minimal families $H_i$, which have been hardly investigated. 
In addition, we introduce two invariants $\underline{N}_X$ and $\overline{N}_X$ as follows: 
$\underline{N}_X$ (resp.\ $\overline{N}_X$) is the minimum (resp.\ maximum) length of chains of families $X \vdash H_1 \vdash \cdots \vdash H_N$ such that $H_N$ is not a Fano manifold (see also Definition \ref{Def11}). 

For Fano manifolds $X$, there are several invariants which show positivity of $-K_X$, for instance, the index $r_X$ and the pseudo-index $i_X$. 
Here $r_X$ is the greatest positive integer $r$ such that $-K_X=rL$ for some line bundle $L$, and $i_X$ is the minimum of intersection numbers of $-K_X$ with rational curves on $X$. 
In general, $r_X \le i_X \le {\dim}X+1$ holds, and equalities hold if and only if $X$ is isomorphic to a projective space (\cite{KO} and \cite{CMSB}). 
These invariants are valuable in many classification problems of higher dimensional manifolds. 
In regard to our invariants, $\underline{N}_X \le \overline{N}_X \le {\dim}X$ holds, and equalities hold if and only if $X$ is isomorphic to a projective space (see Example \ref{Exam3}). 
Thus, $\underline{N}_X$ and $\overline{N}_X$ also seem significant invariants which show positivity of $-K_X$ just like $r_X$ and $i_X$. 

On the other hand, we have a natural question related to classifications of higher Fano manifolds: 
\begin{quote}
Is $\mathbb{P}^n$ the unique Fano $n$-fold whose $k$-th Chern character is weakly positive (see Definition \ref{Def4}) for every $1\le k\le n$?
\end{quote}
That is why it is worth studying Fano manifolds whose Chern characters satisfy some positivity conditions. 
Fano manifolds $X$ with nef ${\rm ch}_2(X)$ were introduced by A.\ J.\ de Jong and J.\ Starr (see \cite{JS}). 
Such manifolds have been investigated by several authors, and classified in case $i_X \ge {\rm dim}X-2$ (\cite{AC2}) and in special toric cases (\cite{No} and \cite{Sa}). 
C.\ Araujo and A.-M.\ Castravet researched Fano manifolds $X$ with weakly positive ${\rm ch}_2(X)$ in \cite{AC1}, and they obtained the following: 
\begin{enumerate}
\item If ${\rm ch}_2(X)$ is weakly positive and there is a family $X \vdash H_1$ of dimension at least $1$, then $H_1$ is a Fano manifold with $\rho_{H_1} \le 2$. (In particular, $\underline{N}_X \ge 2$.)
\item If ${\rm ch}_2(X)$ is weakly positive, ${\rm ch}_3(X)$ is nef, and there is a family $X \vdash H_1$ of dimension at least $2$, then ${\rm ch}_2(H_1)$ is weakly positive, $\rho_{H_1} =1$, and a second family $H_1 \vdash H_2$ has positive dimension. (In particular, $\underline{N}_X \ge 3$.)
\end{enumerate}
Here, we remark that a Fano manifold may possibly have several minimal families, but they have uniform dimension (see Lemma \ref{Lem1}). 

In addition, as higher dimensional versions of S.\ Mori's result, the following results are known: 
\begin{enumerate}
\item If ${\rm ch}_2(X)$ is nef and $i_X \ge 3$, then $X$ is covered by rational surfaces (\cite{JS}). 
\item If ${\rm ch}_2(X)$ is weakly positive, ${\rm ch}_3(X)$ is nef, and there are families $X \vdash H_1 \vdash H_2$ such that ${\rm dim}H_1 \ge 2$ and $H_1, H_2 \not\cong \mathbb{P}^d$, then $X$ is covered by rational 3-folds (\cite{AC1}). 
\end{enumerate}

Our aim of this paper is to generalize these results. 
We will prove the following theorem: 

\begin{thm}\label{Thm1}
Let $N$ be an integer with $2 \le N \le 100$. 
Let $X$ be a Fano manifold with nef Chern characters ${\rm ch}_2(X), \ldots , {\rm ch}_N(X)$. 
Suppose that there is a family $X \vdash H_1$ of dimension at least $N^2-N-1$. 
\begin{enumerate}
\item Then $\underline{N}_X \ge N$ holds. 
\item Furthermore, if $H_1 \not\cong \mathbb{P}^{d}, Q^d$ and there are families $H_1 \vdash H_2 \vdash \cdots \vdash H_{N-1}$ such that $H_2, \ldots , H_{N-1} \not\cong \mathbb{P}^{d}$, then $X$ is covered by rational $N$-folds. 
\end{enumerate}
\end{thm}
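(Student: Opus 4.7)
The plan is to prove both parts by induction on $N$. The base case $N=2$ of part (1) is (the nef analogue of) Araujo--Castravet's result quoted in the introduction, and for part (2) the base case $N=3$ is their second cited result; both can be invoked or re-verified as starting points.

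For part (1), the heart of the induction is the following propagation claim: \emph{if ${\rm ch}_2(X),\ldots,{\rm ch}_N(X)$ are nef and $\dim H_1 \ge N^2-N-1$, then $H_1$ is a Fano manifold with ${\rm ch}_2(H_1),\ldots,{\rm ch}_{N-1}(H_1)$ nef, and any minimal family $H_1 \vdash H_2$ satisfies $\dim H_2 \ge (N-1)^2-(N-1)-1$.} Granting this, the inductive hypothesis applied to $H_1$ yields $\underline{N}_{H_1} \ge N-1$, whence $\underline{N}_X \ge N$. To establish the propagation claim, I would work on the universal-family diagram $H_1 \xleftarrow{p} U_1 \xrightarrow{q} X$, where $p$ is a $\mathbb{P}^1$-bundle and $q$ is the evaluation map. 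Using the relative tangent sequence, the isomorphism $T_{H_1,[C]} \cong H^0(C,N_{C/X})$ at a general minimal rational curve $C$, and the Grothendieck splitting $TX|_C = \mathcal{O}(2) \oplus \mathcal{O}(1)^a \oplus \mathcal{O}^b$, one can express ${\rm ch}_k(H_1)$ as a push-forward $p_*$ of a polynomial in the $q^*{\rm ch}_j(X)$ and relative tangent classes. Nefness of the ${\rm ch}_j(X)$ for $j \le N$ should then force nefness of ${\rm ch}_k(H_1)$ for $k \le N-1$. The dimension bound on $H_2$ is tracked through the VMRT of $H_1$, whose dimension degrades by a controlled amount relative to the VMRT of $X$; the quadratic bound $N^2-N-1$ is the minimum dimension compatible with $N-1$ such controlled drops.

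For part (2), by (1) we have a chain $X \vdash H_1 \vdash \cdots \vdash H_{N-1}$ of Fano manifolds. The covering by rational $N$-folds is obtained by unravelling this chain layer by layer: a general minimal rational curve $C_{N-1} \subset H_{N-1}$ parameterises a rational surface in $H_{N-2}$ (the union of the curves in $H_{N-2}$ indexed by points of $C_{N-1}$), which in turn parameterises a rational $3$-fold in $H_{N-3}$, and so on, ultimately sweeping out a rational $N$-fold in $X$. The hypotheses $H_1 \not\cong \mathbb{P}^d, Q^d$ and $H_j \not\cong \mathbb{P}^d$ for $j \ge 2$ ensure, via the standard characterisations of projective space and the quadric by their VMRTs, that each successive unravelling map is non-degenerate, so that dimension grows by exactly one at each step and the final variety is genuinely $N$-dimensional. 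Letting the starting point vary covers $X$.

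The main obstacle is the Chern-character propagation step in the inductive claim for part (1). The expression for ${\rm ch}_k(H_1)$ in terms of the ${\rm ch}_j(X)$ mixes contributions of opposite signs, and isolating the dominant positive part requires a careful analysis exploiting both the specific splitting type along a minimal rational curve and the positivity of $-K_{U_1/H_1}$. The unusual upper bound $N \le 100$ in the statement strongly suggests that the required positivity inequality is verified case by case for each $N$ up to $100$, rather than by a uniform closed-form estimate; making the argument work uniformly in $N$ is presumably the fundamental obstruction that forced the finite cap.
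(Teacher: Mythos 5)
Your induction for part (1) hinges on the propagation claim that nefness of ${\rm ch}_2(X),\ldots,{\rm ch}_N(X)$ forces nefness of ${\rm ch}_2(H_1),\ldots,{\rm ch}_{N-1}(H_1)$, so that the inductive hypothesis can be applied to $H_1$. This is a genuine gap: that claim is not what the paper proves, and there is no reason to expect it to hold. By the Araujo--Castravet formula (Lemma \ref{Lem5}), ${\rm ch}_j(H_1)$ contains the term $-\frac{1}{j!}c_1(L_1)^j$ and, for even $j\ge 4$, a negative Bernoulli contribution $\frac{(-1)^jB_j}{j!}T(c_1(X))\cdot c_1(L_1)^j$ (e.g.\ $B_4=-\tfrac{1}{30}$), so nefness of the higher ${\rm ch}_j(H_1)$ is doubtful; even Araujo--Castravet only recover weak positivity of ${\rm ch}_2(H_1)$, and only by assuming ${\rm ch}_3(X)$ nef in addition. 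The paper circumvents this entirely: it never passes nefness down the chain. Instead it writes ${\rm ch}_j(H_i)$ as a universal linear combination $\sum_k b_{(i,j,k)}\alpha_{(i,j,k)}$ whose inputs are the Chern characters \emph{of $X$ itself}, transported by the iterated transform $T^i$ (Proposition \ref{Prop1}); since $T$ preserves nefness (Lemma \ref{Lem4}(3)), the only issue is the sign of the coefficients $b_{(i,j,k)}$, which is checked by computer for $j=1,2$ and $i<100$ (Conjecture \ref{Conj1}, Remark \ref{Rmk4}) --- this, not a case-by-case inequality in $N$, is the precise source of the cap $N\le 100$, though you correctly guessed a computer verification was behind it. Only positivity of $c_1(H_i)$ and ${\rm ch}_2(H_i)$ is ever needed: the former keeps each $H_i$ Fano, the latter forces $a_{i+1}=1$ via Proposition \ref{Prop3}(2) so that the universal formula keeps applying; and the explicit $k=0,1$ coefficients (Proposition \ref{Prop2}) are what produce the threshold $d_1\ge N^2-N-1$, since $-\tfrac{i}{2}+\tfrac{i(d_1+2)}{2(i+1)(i+2)}>0$ exactly when $d_1+2>(i+1)(i+2)$.

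For part (2) your picture of unravelling the chain is the right one, but the sketch is missing the tool that makes it rigorous: Lemma \ref{Lem3}, which converts a subvariety $Z\subset H^{\circ}$ with $(Z,L|_Z)\cong(\mathbb{P}^k,\mathscr{O}(1))$ into a generically injective morphism $(\mathbb{P}^{k+1},p)\to(X,x)$. The condition $a_i=1$ at every level (established in part (1)) is what guarantees that each image is a linearly embedded projective space of $L$-degree one under the tangent map, so the hypotheses of Lemma \ref{Lem3} are met at each step; ``the union of the curves indexed by points of $C_{N-1}$'' does not by itself yield a rational variety of the expected dimension. One must also handle the possibility that the constructed $\mathbb{P}^{N-1}\subset H_1$ meets the finite bad locus $H_1\setminus H_1^{\circ}$, which the paper excludes by showing that otherwise $H_1\cong\mathbb{P}^{d_1}$, contrary to hypothesis; your appeal to VMRT characterisations of $\mathbb{P}^d$ and $Q^d$ does not address this point (the hypothesis $H_1\not\cong Q^d$ is actually used earlier, in Proposition \ref{Prop3}(3), to get $a_2=1$ from mere nefness of ${\rm ch}_2(X)$).
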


This paper is organized as follows. 
In Section 2, we explain several facts on families of rational curves, and introduce higher order minimal families and the invariants $\overline{N}_X$ and $\underline{N}_X$. 
In Section 3, we mention classical results related to the Bernoulli numbers and binomial coefficients. 
In Section 4, we compute the Chern characters of higher order minimal families. 
In Section 5, we prove Theorem \ref{Thm1}. 
The author used a computer for some part of computation in Section 4. 

\begin{ack}
The author would like to express his gratitude to his supervisor Professor Hajime Kaji for valuable discussions and helpful advice. 
The author would also like to thank Professor Yasunari Nagai for beneficial suggestions and comments. 
In particular, he suggested considering the invariants $\overline{N}_X$ and $\underline{N}_X$. 
The author is also grateful to Professor Kiwamu Watanabe for useful comments. 
The author is supported by Grant-in-Aid for Research Activity Start-up from the Japan Society for the Promotion of Science. 
\end{ack}

\section{Families of rational curves}

First, we mention the definitions of positivity of cycles and the Picard number. 

\begin{dfn}\label{Def4}
Let $Y$ be a projective manifold, and $r$ a non-negative integer. 
We denote by $N^r(Y)$ (resp.\ $N_r(Y)$) the group of cycles of codimension $r$ (resp.\ dimension $r$) on $Y$ modulo numerical equivalence, and set $N^r(Y)_{\mathbb{R}}:=N^r(Y) \otimes \mathbb{R}$ and $N_r(Y)_{\mathbb{R}}:=N_r(Y) \otimes \mathbb{R}$. 
If $\alpha \in N^r(Y)_{\mathbb{R}}$ satisfies $\alpha \cdot \beta \ge 0$ (resp.\ $\alpha \cdot \beta > 0$) for every effective integral cycle $\beta \ne 0$ of dimension $r$, we say that $\alpha$ is \textit{nef} (resp.\ \textit{weakly positive}) and write $\alpha \ge 0$ (resp.\ $\alpha >0$). 
For $\alpha_1, \alpha_2 \in N^r(Y)_{\mathbb{R}}$, we also write $\alpha_1 \ge \alpha_2$ (resp.\ $\alpha_1 > \alpha_2$) when $\alpha_1 - \alpha_2 \ge 0$ (resp.\ $\alpha_1 - \alpha_2 > 0$).
We call $\rho_Y := {\rm dim}_{\mathbb{R}}N_1(Y)_{\mathbb{R}}$ the Picard number of $Y$. 
\end{dfn}

We refer to \cite{AC1} and \cite[I and II]{Ko} for basic theory of families of rational curves. 
In this section, we consider a smooth Fano manifold $X$ of dimension $n$. 
Let $x$ be a general point of $X$. 

\begin{dfn}\label{Def1}
We denote by $\text{RatCurves}^n(X,x)$ the normalization of the scheme of rational curves on $X$ passing through $x$ (see \cite[II.2]{Ko}).
An irreducible component $H$ of $\text{RatCurves}^n(X,x)$ is called a \textit{family of rational curves through $x$}. 
Notice that $H$ is smooth because $x$ is a general point. 
When $H$ parametrizes curves of minimal $(-K_X)$-degree among rational curves through $x$, we say that it is \textit{minimal}, and write $X \vdash H$ in this paper. 
\end{dfn}

From now on, let $H$ be a minimal family of rational curves through $x$, and $d$ the dimension of $H$. 
The minimality yields that any curve obtained by a deformation of curves parametrized by $H$ must be irreducible, so $H$ is proper. 

\begin{lem}[{\cite{CMSB} and \cite{Mi}}, see also \cite{Ke1} and \cite{DH}]\label{Lem1}
We have the following: 
\begin{enumerate}
\item $d=(-K_X \cdot C) -2 \le n-1$, where $C$ is a curve parametrized by $H$. 
\item $d=n-1$ holds if and only if $X \cong \mathbb{P}^n$. 
\item When $n \ge 3$ and $\rho_X=1$, $d=n-2$ holds if and only if $X \cong Q^n$. 
\end{enumerate}
\end{lem}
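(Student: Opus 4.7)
The plan is to prove (1) by standard deformation theory of rational curves on Fano manifolds, and to reduce (2) and (3) to the classical characterizations of the projective space and the smooth quadric cited in \cite{CMSB} and \cite{Mi}.

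For (1), I would fix a curve $C$ parametrized by $H$ together with its normalization $f \colon \mathbb{P}^1 \to X$ with $f(0) = x$. Since $x$ is general, $f$ is free and an immersion at $0$; since $H$ is minimal, every deformation of $C$ keeping $x$ fixed remains irreducible, so Mori's bend-and-break argument rules out any summand of $f^*T_X$ of degree $\ge 3$. Combined with the embedding $T_{\mathbb{P}^1} \hookrightarrow f^*T_X$ provided by $df$, this forces a splitting
\[
f^*T_X \cong \mathcal{O}_{\mathbb{P}^1}(2) \oplus \mathcal{O}_{\mathbb{P}^1}(1)^{\oplus p} \oplus \mathcal{O}_{\mathbb{P}^1}^{\oplus (n-1-p)}
\]
for some $0 \le p \le n-1$. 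Taking degrees gives $-K_X \cdot C = 2 + p$. The dimension of $H$ at $[C]$ is then computed as $h^0(\mathbb{P}^1, f^*T_X(-1)) - 2 = (p+2) - 2 = p$, where the subtraction of $2$ accounts for the automorphisms of $\mathbb{P}^1$ fixing $0$. This yields $d = -K_X \cdot C - 2 \le n - 1$.

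For (2), the extremal case $d = n - 1$ forces $f^*T_X \cong \mathcal{O}_{\mathbb{P}^1}(2) \oplus \mathcal{O}_{\mathbb{P}^1}(1)^{\oplus (n-1)}$, i.e.\ the curves of $H$ through $x$ are lines in the sense of Cho--Miyaoka--Shepherd-Barron. The characterization of projective space in \cite{CMSB} then gives $X \cong \mathbb{P}^n$; conversely, the lines through a point in $\mathbb{P}^n$ form a $\mathbb{P}^{n-1}$, so that family is minimal of dimension $n-1$. For (3), the case $d = n-2$ gives $-K_X \cdot C = n$, and under $\rho_X = 1$ and $n \ge 3$ Miyaoka's classification \cite{Mi} identifies $X$ with the smooth quadric $Q^n$; conversely, the lines on $Q^n$ through a general point are parametrized by $Q^{n-2}$, realizing $d = n - 2$.

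The essentially non-trivial content resides in the cited classification theorems \cite{CMSB} and \cite{Mi}; the main task on my side is to justify the splitting type of $f^*T_X$ in (1) from minimality and generality, and to confirm that the standard deformation-theoretic count produces $d = -K_X \cdot C - 2$. Both steps are routine in Mori theory but require careful bookkeeping of the free/standard hypothesis and the action of $\operatorname{Aut}(\mathbb{P}^1, 0)$.
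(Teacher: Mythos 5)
Your proposal is correct and matches the paper's treatment: the paper does not prove this lemma but simply cites \cite{CMSB}, \cite{Mi} (and \cite{Ke1}, \cite{DH}), and your sketch is exactly the standard route underlying those citations --- freeness from generality of $x$ gives unobstructedness and the dimension count $d = h^0(f^*T_X(-1)) - 2 = (-K_X\cdot C) - 2$, while (2) and (3) reduce to the Cho--Miyaoka--Shepherd-Barron and Miyaoka characterizations via the length bounds $-K_X\cdot C = n+1$ and $n$. One small imprecision: bend-and-break applied to deformations fixing two points gives $h^0(f^*T_X(-2)) \le 1$, which excludes not only a summand of degree $\ge 3$ but also two summands of degree $2$; both exclusions are needed to pin down the standard splitting $\mathcal{O}(2)\oplus\mathcal{O}(1)^{\oplus p}\oplus\mathcal{O}^{\oplus(n-1-p)}$.
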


\begin{dfn}\label{Def2}
Let $U$ be the universal family of $H$, and let $\pi : U \rightarrow H$ and $e: U \rightarrow X$ be the associated morphisms. 
We note that $\pi$ is a $\mathbb{P}^1$-bundle and there is a unique section $\sigma : H \rightarrow \Sigma:=\sigma(H) \subset U$ such that $e(\Sigma)=\{x\}$. 
The cokernel of the inclusion
$$\sigma^* T_{\pi} \hookrightarrow \sigma^* e^* T_X \cong T_x X \otimes \mathscr{O}_H$$
is locally free, and it defines a finite morphism $\tau : H \rightarrow \mathbb{P}(T_x X^{\vee})$ (see \cite[Theorem 3.3 and 3.4]{Ke2}). 
We call this morphism $\tau$ the \textit{tangent map}. 
We notice that it is birational onto its image (see \cite{HM}) and sends a curve smooth at $x$ to its tangent direction. 
Let $L$ be the ample line bundle $\tau ^* \mathscr{O}(1)$. 
We call the pair $(H,L)$ a \textit{polarized minimal family of rational curves through $x$}. 
\end{dfn}

\begin{exam}\label{Exam2}
We have the following: 
\begin{enumerate}
\item When $X=\mathbb{P}^n$, $(H,L) \cong (\mathbb{P}^{n-1}, \mathscr{O}_{\mathbb{P}^{n-1}}(1))$. 
\item When $X=Q^n$, $(H,L) \cong (Q^{n-2}, \mathscr{O}_{Q^{n-2}}(1))$. 
\item When $X={\rm Bl}_{S_m}(\mathbb{P}^n)$, where $S_m$ is a linear space of dimension $m$ in $\mathbb{P}^n$, $(H,L) \cong (\mathbb{P}^m, \mathscr{O}_{\mathbb{P}^m}(1))$. 
\end{enumerate}
\end{exam}

\begin{lem}[see {\cite[2.2]{AC1}}]\label{Lem2}
Let $\pi$, $e$, $\Sigma$, and $L$ be as in Definition \ref{Def2}. 
\begin{enumerate}
\item Let $D$ be an $\mathbb{R}$-divisor on $X$, and set $a:=(D \cdot C)$, where $C$ is a curve parametrized by $H$. 
Then $e^* D=a(\Sigma+\pi^*c_1(L))$. 
\item $\Sigma \cdot e^* \alpha =0$ for any cycle $\alpha$ of positive codimension on $X$. 
\end{enumerate}
\end{lem}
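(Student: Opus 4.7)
The plan is to exploit the $\mathbb{P}^1$-bundle structure of $\pi \colon U \to H$ together with the fact that $e \circ \sigma \colon H \to X$ is the constant map to $x$. Because $\pi$ is a $\mathbb{P}^1$-bundle and $\Sigma$ is a section, one has the direct sum decomposition $N^1(U)_{\mathbb{R}} = \pi^* N^1(H)_{\mathbb{R}} \oplus \mathbb{R}[\Sigma]$. Hence for (1) I would write $e^*D = b\Sigma + \pi^*\gamma$ for some $b \in \mathbb{R}$ and $\gamma \in N^1(H)_{\mathbb{R}}$, and pin down $b$ and $\gamma$ by two natural computations.

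To determine $b$, I would restrict to a general fiber $F$ of $\pi$. Since $e|_F$ maps $F$ isomorphically onto a curve numerically equivalent to $C$, the projection formula gives $e^*D \cdot F = D \cdot C = a$; combined with $\Sigma \cdot F = 1$ and $\pi^*\gamma \cdot F = 0$, this forces $b = a$.

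To determine $\gamma$, I would pull back along $\sigma$. Since $e \circ \sigma$ is constant, $\sigma^* e^* D = 0$. The key input is then $\sigma^*[\Sigma] = c_1(N_{\Sigma/U}) = c_1(\sigma^*T_\pi)$, and here I would unwind Definition \ref{Def2}: the inclusion $\sigma^*T_\pi \hookrightarrow T_xX \otimes \mathscr{O}_H$ exhibits $\sigma^*T_\pi$ as the tautological subbundle associated with $\tau$, namely $\tau^*\mathscr{O}_{\mathbb{P}(T_xX^\vee)}(-1) \cong L^{-1}$. Thus $\sigma^*[\Sigma] = -c_1(L)$, and applying $\sigma^*$ to $e^*D = a\Sigma + \pi^*\gamma$ (using $\pi\sigma = \mathrm{id}_H$) yields $0 = -a c_1(L) + \gamma$, giving $\gamma = a c_1(L)$. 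This proves (1). Part (2) should follow immediately from the same constancy: for any cycle $\alpha$ of positive codimension on $X$, $(e\sigma)^*\alpha = 0$, so by the projection formula
\[
\Sigma \cdot e^*\alpha = \sigma_*\sigma^*e^*\alpha = \sigma_*(e\sigma)^*\alpha = 0.
\]

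The only non-routine step is the identification $\sigma^*T_\pi \cong L^{-1}$, for which one must carefully trace through the construction of $\tau$ in Definition \ref{Def2} to verify that the tautological subbundle on $\mathbb{P}(T_xX^\vee)$ pulls back to $\sigma^*T_\pi$ and not to its dual. Once this sign is nailed down, everything else reduces to elementary bookkeeping on the $\mathbb{P}^1$-bundle $U$.
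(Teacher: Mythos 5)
Your proof is correct, and it is essentially the standard argument from the cited source \cite[2.2]{AC1} (the paper itself gives no proof, only the citation): decompose $N^1(U)_{\mathbb{R}}$ using the section, fix the $\Sigma$-coefficient by restricting to a fiber, and fix the $\pi^*$-part via $\sigma^*e^*D=0$ together with $\sigma^*[\Sigma]=c_1(N_{\Sigma/U})=c_1(\sigma^*T_\pi)=-c_1(L)$. The sign identification $\sigma^*T_\pi\cong\tau^*\mathscr{O}(-1)=L^{-1}$ that you flag is indeed the one point requiring care, and your reading of Definition \ref{Def2} is the right one.
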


\begin{dfn}\label{Def3}
We denote by $H^{\circ}$ the subvariety of $H$ parametrizing curves that is smooth at $x$.  
It is known that $H \backslash H^{\circ}$ is at most finite (see \cite[Theorem 3.3]{Ke2}).  
\end{dfn}

\begin{lem}[{\cite[Lemma 2.3]{AC1}}]\label{Lem3}
Let $L$ and $H^{\circ}$ be as in Definitions \ref{Def2} and \ref{Def3}. 
Suppose that there is a subvariety $Z \subset H^{\circ}$ such that $(Z,L|_Z)\cong (\mathbb{P}^k,\mathscr{O}_{\mathbb{P}^k}(1))$. 
Then there is a generically injective morphism $f: (\mathbb{P}^{k+1},p) \rightarrow (X,x)$ that maps lines through $p$ birationally to curves parametrized by $H$. 
\end{lem}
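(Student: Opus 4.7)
The plan is to realize $\mathbb{P}^{k+1}$ as a contraction of the universal family restricted to $Z$ and to factor the evaluation map through it. First I restrict everything to $Z$: set $U_Z := \pi^{-1}(Z)$, let $\pi_Z : U_Z \to Z \cong \mathbb{P}^k$ denote the induced $\mathbb{P}^1$-bundle, write $e_Z := e|_{U_Z} : U_Z \to X$, and put $\Sigma_Z := \Sigma \cap U_Z = \sigma(Z) \cong \mathbb{P}^k$, which $e_Z$ collapses to the point $x$.

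The heart of the proof is identifying $U_Z$ as the blow-up of $\mathbb{P}^{k+1}$ at a point. The relative tangent sequence for $\sigma$ yields $N_{\Sigma/U} \cong \sigma^*T_\pi$, and the sub-line-bundle $\sigma^*T_\pi \hookrightarrow T_x X \otimes \mathscr{O}_H$ of Definition \ref{Def2} is by construction the $\tau$-pullback of the tautological sub $\mathscr{O}_{\mathbb{P}(T_x X^\vee)}(-1) \hookrightarrow T_x X \otimes \mathscr{O}$, so $\sigma^*T_\pi \cong L^\vee$ and
\[
N_{\Sigma_Z/U_Z} \;\cong\; L^\vee|_Z \;\cong\; \mathscr{O}_{\mathbb{P}^k}(-1).
\]
Writing $U_Z = \mathbb{P}(E)$ for a rank-$2$ bundle $E$ on $\mathbb{P}^k$ and normalizing so that $\Sigma_Z$ corresponds to a quotient $E \twoheadrightarrow \mathscr{O}_{\mathbb{P}^k}$ with kernel $K$, the normal bundle computation forces $K \cong \mathscr{O}_{\mathbb{P}^k}(1)$. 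Since $H^1(\mathbb{P}^k,\mathscr{O}(1)) = 0$, the defining extension splits, so $U_Z \cong \mathbb{P}(\mathscr{O}_{\mathbb{P}^k}\oplus \mathscr{O}_{\mathbb{P}^k}(1))$. This $\mathbb{P}^1$-bundle is classically identified with $\mathrm{Bl}_p\mathbb{P}^{k+1}$ with $\Sigma_Z$ as exceptional divisor, yielding a contraction $\phi : U_Z \to \mathbb{P}^{k+1}$ with $\phi^{-1}(p) = \Sigma_Z$.

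The remaining steps are formal. Since $e_Z$ contracts $\Sigma_Z = \phi^{-1}(p)$ to the point $x$, the rigidity lemma produces a unique morphism $f : \mathbb{P}^{k+1} \to X$ with $e_Z = f\circ \phi$ and $f(p) = x$. Under $\phi$ the fibers of $\pi_Z$ correspond to the lines through $p$ in $\mathbb{P}^{k+1}$, and for $q \in Z \subset H^\circ$ the map $e_Z$ sends $\pi_Z^{-1}(q) \cong \mathbb{P}^1$ birationally onto the rational curve parametrized by $q$ (smooth at $x$ because $q \in H^\circ$); so $f$ carries lines through $p$ birationally to curves parametrized by $H$. Generic injectivity of $f$ then reduces to the observation that $df_p$ is injective (since $\tau|_Z$ is an embedding, as $L|_Z = \mathscr{O}_{\mathbb{P}^k}(1)$ is very ample), together with the standard minimality fact that two distinct members of $H$ through the general point $x$ generically meet only at $x$, so no two distinct lines through $p$ can be identified by $f$.

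The main obstacle is the normal-bundle identification that pins down $U_Z$ as $\mathrm{Bl}_p\mathbb{P}^{k+1}$; once that structural input is in place, the factorization through $\phi$ and the birationality on lines are clean applications of rigidity and of the definitions of $\tau$ and $H^\circ$.
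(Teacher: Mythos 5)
This lemma is imported verbatim from \cite[Lemma 2.3]{AC1}; the paper under review gives no proof of its own, and your argument is essentially the standard one from that source: restrict the universal family to $Z$, identify $N_{\Sigma_Z/U_Z}\cong \sigma^*T_\pi|_Z\cong L^\vee|_Z\cong\mathscr{O}_{\mathbb{P}^k}(-1)$, conclude $U_Z\cong\mathbb{P}(\mathscr{O}\oplus\mathscr{O}(1))\cong\mathrm{Bl}_p\mathbb{P}^{k+1}$, and factor $e_Z$ through the blow-down by rigidity. All of that is correct, including the identification $\sigma^*T_\pi\cong\tau^*\mathscr{O}(-1)$ and the splitting of the extension defining $U_Z$.

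The one step I would not accept as written is the justification of generic injectivity. The assertion that ``two distinct members of $H$ through the general point $x$ generically meet only at $x$'' is not a standard fact one can cite: for a minimal family one only knows (by bend-and-break) that finitely many members pass through $x$ and a second general point of the locus, not that this number is one; that is precisely the content of generic injectivity, so invoking it is close to circular. Fortunately you do not need it: the ingredients you already have suffice. Since $Z\subset H^{\circ}$, every curve parametrized by $Z$ is smooth at $x$, so $e_Z^{-1}(x)=\Sigma_Z$ and hence $f^{-1}(x)=\{p\}$; and $df_p$ is injective because the projectivized differential at $p$ is $\tau|_Z$, a linear embedding. A proper morphism with finite fibre over $x$, unramified and injective over $x$, is a closed immersion over a neighbourhood of $x$ in the image; being also surjective onto the (reduced) image $W=f(\mathbb{P}^{k+1})$, which has dimension $k+1$ by immersivity at $p$, it is an isomorphism there, so $\deg\bigl(f\colon\mathbb{P}^{k+1}\to W\bigr)=1$. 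With the last step repaired in this way your proof is complete and coincides with the argument of \cite{AC1}.
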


\begin{dfn}[{\cite[Definition 2.6]{AC1}}]\label{Def5}
Let $\pi$ and $e$ be as in Definition \ref{Def2}. 
For every positive integer $r$, we define a linear map
$$T: N^r(X)_{\mathbb{R}} \rightarrow N^{r-1}(H)_{\mathbb{R}},\ \ \alpha \mapsto \pi_* e^* \alpha.$$
\end{dfn}

\begin{lem}\label{Lem4}
Let $L$ and $T$ be as in Definitions \ref{Def2} and \ref{Def5}. 
Let $D$ be an $\mathbb{R}$-divisor on $X$, and set $a:=(D \cdot C)$, where $C$ is a curve parametrized by $H$, and $\alpha \in N^r(X)_{\mathbb{R}}$. 
\begin{enumerate}
\item $T(D^k)=a^k c_1(L)^{k-1}$. In particular, $T(c_1(X))=d+2$ (see Lemma \ref{Lem1}(1)). 
\item $T(\alpha \cdot D^k)=a^k T(\alpha) \cdot c_1(L)^k$. 
\item If $\alpha \ge 0$ (resp.\ $\alpha >0$), then $T(\alpha) \ge 0$ (resp.\ $T(\alpha) >0$). 
\end{enumerate}
\end{lem}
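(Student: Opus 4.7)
My plan is to systematically exploit Lemma \ref{Lem2} together with the projection formula for the $\mathbb{P}^1$-bundle $\pi : U \to H$. The central observation is that, writing $D' := \pi^* c_1(L)$, Lemma \ref{Lem2}(2) applied to $-K_X$ (whose intersection with $C$ is $d+2 \ne 0$) yields $\Sigma \cdot (\Sigma + D') = 0$, hence $\Sigma^2 = -\Sigma \cdot D'$ and inductively $\Sigma^i = (-1)^{i-1}\Sigma \cdot (D')^{i-1}$ for $i \ge 1$. Pushing forward by $\pi$ and using $\pi_*\Sigma = 1$ while $\pi_*((D')^j) = 0$ for every $j$ (because $\pi$ has positive relative dimension), one obtains the key formula $\pi_*\bigl(\Sigma^i (D')^{k-i}\bigr) = (-1)^{i-1} c_1(L)^{k-1}$ for $1 \le i \le k$, and $0$ for $i = 0$.

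Part (1) then follows by binomial expansion of $(e^*D)^k = a^k (\Sigma + D')^k$ and application of $\pi_*$: the surviving terms combine to $a^k c_1(L)^{k-1} \sum_{i=1}^k \binom{k}{i}(-1)^{i-1} = a^k c_1(L)^{k-1}$. The ``in particular'' statement is the case $D = -K_X = c_1(X)$, $k = 1$, using Lemma \ref{Lem1}(1) to identify $a = d+2$. For part (2) the computation is even cleaner: in $e^*\alpha \cdot (e^*D)^k = a^k e^*\alpha \cdot (\Sigma + D')^k$, every term containing a positive power of $\Sigma$ vanishes by $\Sigma \cdot e^*\alpha = 0$ (Lemma \ref{Lem2}(2)), leaving only $e^*\alpha \cdot (D')^k$, and the projection formula yields $T(\alpha) \cdot c_1(L)^k$.

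For part (3), the nef direction is immediate from two applications of the projection formula: for any effective integral cycle $\beta$ of dimension $r-1$ on $H$,
\[
T(\alpha) \cdot \beta \;=\; e^*\alpha \cdot \pi^*\beta \;=\; \alpha \cdot e_*(\pi^*\beta),
\]
and $e_*(\pi^*\beta)$ is effective, so nefness of $\alpha$ forces the intersection number to be nonnegative. The main obstacle is the weakly positive case, where I need $e_*(\pi^*\beta) \ne 0$ whenever $\beta \ne 0$. For this I would appeal to the geometry of Definition \ref{Def2}: since the tangent map $\tau$ is finite and birational onto its image, distinct points of $H$ correspond to distinct tangent directions at $x$, and hence parametrize pairwise distinct curves through $x$. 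Therefore, for $Z = \mathrm{supp}(\beta)$ of dimension $r-1$, the curves $\{C_h\}_{h \in Z}$ sweep out a subvariety of $X$ of dimension $\dim Z + 1 = r$, so $e|_{\pi^{-1}(Z)}$ is generically finite and $e_*(\pi^*\beta)$ is a nonzero effective $r$-cycle on $X$. Weak positivity of $\alpha$ then gives $\alpha \cdot e_*(\pi^*\beta) > 0$, which is precisely $T(\alpha) \cdot \beta > 0$.
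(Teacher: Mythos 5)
Your proof of part (2) is exactly the paper's argument: expand $e^*(\alpha\cdot D^k)$ via Lemma \ref{Lem2}(1), kill every term involving $\Sigma$ by Lemma \ref{Lem2}(2), and push forward. Where you genuinely diverge is in parts (1) and (3): the paper does not prove these at all, but simply cites \cite[Lemma 2.7]{AC1}, whereas you reconstruct them from the two stated facts in Lemma \ref{Lem2}. Your derivation of (1) is correct and self-contained: applying Lemma \ref{Lem2}(2) to $-K_X$ gives $\Sigma\cdot(\Sigma+\pi^*c_1(L))=0$ (using $(-K_X\cdot C)=d+2\ne 0$), whence $\Sigma^i=(-1)^{i-1}\Sigma\cdot\pi^*c_1(L)^{i-1}$, and the binomial sum $\sum_{i=1}^k\binom{k}{i}(-1)^{i-1}=1$ collapses the expansion to $a^kc_1(L)^{k-1}$. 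This is a nice reconstruction of what is essentially the computation in \cite{AC1}, and it buys a self-contained proof at the cost of a page of bookkeeping the paper avoids by citation.

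One step in part (3) is softer than the rest. The reduction $T(\alpha)\cdot\beta=\alpha\cdot e_*(\pi^*\beta)$ and the nef case are fine, but for weak positivity the claim that $e_*(\pi^*\beta)\ne 0$ does not follow merely from the fact that distinct points of $H$ parametrize distinct curves: a positive-dimensional family of pairwise distinct curves can in principle sweep out a locus of the same dimension as the family (so that $e|_{\pi^{-1}(Z)}$ drops dimension by one and $e_*(\pi^*\beta)=0$). What rules this out here is that $H$ is a \emph{minimal}, hence proper and unsplit, family: by Mori's bend-and-break, the curves of an unsplit family passing through the fixed point $x$ and a second fixed point $y\ne x$ form a finite set, so $e|_{\pi^{-1}(Z)}$ is generically finite onto its image and $\dim e(\pi^{-1}(Z))=\dim Z+1$. (Also, birationality of $\tau$ onto its image does not by itself give injectivity of the tangent-direction assignment, though you do not actually need that.) With the bend-and-break input made explicit, your argument for (3) is complete and matches the content of the cited \cite[Lemma 2.7]{AC1}.
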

\begin{proof}
(1) and (3) have been proved in \cite[Lemma 2.7]{AC1}. 
We show (2). 
By using Lemma \ref{Lem2} and the projection formula, we see:
\begin{eqnarray*}
T(\alpha \cdot D^k)&=&\pi_*\biggl\{e^* \alpha \cdot a^k \sum_{i=0}^{k} \binom{k}{i}\Sigma^{k-i} \cdot \pi^*c_1(L)^i \biggr\}\\
&=&a^k \pi_* \Bigl(e^* \alpha \cdot \pi^*c_1(L)^k \Bigr)\\
&=&a^k T(\alpha) \cdot c_1(L)^k.
\end{eqnarray*}
\end{proof}

\begin{lem}[{\cite[Proposition 1.3]{AC1}}]\label{Lem5}
Let $L$ and $T$ be as in Definitions \ref{Def2} and \ref{Def5}, and let $B_m$ be the $m$-th Bernoulli number (see Definition \ref{Def7}). 
For every positive integer $j$, the $j$-th Chern character of $H$ is given by the formula: 
$${\rm ch}_j (H)= \sum_{m=0}^j \frac{(-1)^m B_m}{m!}T({\rm ch}_{j+1-m}(X)) \cdot c_1(L)^m - \frac{1}{j!}c_1(L)^j.$$
\end{lem}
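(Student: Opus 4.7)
I would prove this by applying Grothendieck--Riemann--Roch (GRR) to the $\mathbb{P}^1$-bundle $\pi: U \to H$, after first identifying $T_H$ as a quotient of direct images along $\pi$. By standard deformation theory of pointed rational curves, the tangent space $T_h H$ at the class of $C_h$ with parametrization $f_h:\mathbb{P}^1\to X$, $f_h(0)=x$, is canonically $H^0(\mathbb{P}^1,f_h^*T_X\otimes\mathfrak{m}_0)$ modulo the reparametrization subspace $H^0(\mathbb{P}^1,T_{\mathbb{P}^1}\otimes\mathfrak{m}_0)$. Because the minimal splitting $f_h^*T_X\cong\mathscr{O}(2)\oplus\mathscr{O}(1)^d\oplus\mathscr{O}^{n-d-1}$ forces $R^1\pi_*$-vanishing for both relevant sheaves, the family version yields a locally free short exact sequence on $H$:
\[
0\longrightarrow\pi_*\bigl(T_\pi(-\Sigma)\bigr)\longrightarrow\pi_*\bigl(e^*T_X(-\Sigma)\bigr)\longrightarrow T_H\longrightarrow 0.
\]

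Applying GRR to each pushforward and subtracting gives
\[
{\rm ch}(T_H)=\pi_*\!\bigl[\bigl({\rm ch}(e^*T_X)-{\rm ch}(T_\pi)\bigr)\cdot e^{-\Sigma}\cdot{\rm td}(T_\pi)\bigr].
\]
The Bernoulli coefficients in the statement originate from the Todd class of the line bundle $T_\pi$, namely ${\rm td}(T_\pi)=\sum_{m\ge 0}\frac{(-1)^m B_m}{m!}c_1(T_\pi)^m$. Restricting to a fiber (where $c_1(T_{\mathbb{P}^1})=2[\mathrm{pt}]$) and pulling back by $\sigma$ (using $\sigma^*T_\pi\cong L^{-1}$, hence $\sigma^*\Sigma=-c_1(L)$) identifies $c_1(T_\pi)=2\Sigma+\pi^*c_1(L)$ and then $\Sigma^2=-\Sigma\cdot\pi^*c_1(L)$, which iterates to $\pi_*(\Sigma^{k+1})=(-c_1(L))^k$ for every $k\ge 0$.

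One then splits ${\rm ch}(e^*T_X)=n+\sum_{p\ge 1}e^*{\rm ch}_p(X)$. For the $p\ge 1$ part, Lemma~\ref{Lem2}(2) kills every $\Sigma$-factor that multiplies $e^*{\rm ch}_p(X)$, so $e^{-\Sigma}$ collapses to $1$ and only the pure $(\pi^*c_1(L))^m$ term of $(2\Sigma+\pi^*c_1(L))^m$ survives. The projection formula, combined with matching degrees via $p=j+1-m$, produces exactly $\sum_{m=0}^{j}\frac{(-1)^m B_m}{m!}T({\rm ch}_{j+1-m}(X))c_1(L)^m$. What is left is the rank-and-$T_\pi$ contribution $\pi_*\bigl[(n-e^{c_1(T_\pi)})\cdot e^{-\Sigma}\cdot{\rm td}(T_\pi)\bigr]_j$, which must be shown to equal $-\frac{1}{j!}c_1(L)^j$; this is the main technical step and is carried out by a generating-function manipulation in which all $n$-dependence cancels after using $\pi_*(\Sigma^{k+1})=(-c_1(L))^k$. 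The $j=1$ case, which recovers $-K_H=T({\rm ch}_2(X))+\tfrac{d}{2}c_1(L)$ as in \cite{AC1}, is a reassuring sanity check on signs and normalizations.
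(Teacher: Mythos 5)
The paper does not prove Lemma~\ref{Lem5} itself---it is quoted from \cite[Proposition 1.3]{AC1}---and your Grothendieck--Riemann--Roch outline is exactly the argument used there: the locally free sequence $0\to\pi_*(T_\pi(-\Sigma))\to\pi_*(e^*T_X(-\Sigma))\to T_H\to 0$ coming from the minimal splitting type, GRR along $\pi$, and the relations $\sigma^*T_\pi\cong L^{-1}$, $c_1(T_\pi)=2\Sigma+\pi^*c_1(L)$, $\Sigma^2=-\Sigma\cdot\pi^*c_1(L)$, together with Lemma~\ref{Lem2}(2) to isolate the $T({\rm ch}_{j+1-m}(X))\cdot c_1(L)^m$ terms. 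Your one deferred assertion is also correct: writing $t=c_1(T_\pi)$ and $\ell=c_1(L)$ and using the substitution rule $\pi_*F(\Sigma)=\bigl(F(-\ell)-F(0)\bigr)/(-\ell)$ forced by $\Sigma^{k+1}=\Sigma\cdot(-\pi^*\ell)^k$, one gets $\pi_*\bigl[(n-e^{t})e^{-\Sigma}\,{\rm td}(T_\pi)\bigr]=-(e^{\ell}+1)$, whose degree-$j$ part is $-\frac{1}{j!}c_1(L)^j$ for $j\ge 1$, with the rank $n$ cancelling as you predicted (and the degree-$0$ part $-2$ combining with $T(c_1(X))=d+2$ to give ${\rm ch}_0(T_H)=d$, a further consistency check).
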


We define \textit{higher order polarized minimal families of rational curves} associated to $X$ as follows: 

\begin{dfn}\label{Def6}
We write $X \vdash H_1 \vdash \cdots \vdash H_N$ when $H_{i-1}$ is a Fano manifold of positive dimension and $H_{i-1} \vdash H_i$ for every $1 \le i \le N$, where $H_0:=X$. 
Let $L_i$ be the polarization associated to $H_i$ as in Definition \ref{Def2}. 
We call $(H_i,L_i)$ an \textit{$i$-th order polarized minimal family of rational curves} associated to $X$. 
For positive integers $m$ and $r$, we define a linear map
$$T^m: N^r(H_i)_{\mathbb{R}} \rightarrow N^{r-m}(H_{i+m})_{\mathbb{R}}$$
as the composition of $T$'s associated to $H_{i+1}, \ldots ,H_{i+m}$ in Definition \ref{Def5}. 
\end{dfn}

\begin{dfn}\label{Def11}
We define the \textit{length} of a chain $X \vdash H_1 \vdash \cdots \vdash H_N$ as $N$, and denote by $\underline{N}_X$ (resp.\ $\overline{N}_X$) the minimum (resp.\ maximum) length of chains of families $X \vdash H_1 \vdash \cdots \vdash H_N$ such that $H_N$ is not a Fano manifold. Clearly, $1 \le \underline{N}_X \le \overline{N}_X \le n$ for any Fano $n$-fold. 
In addition, if $Y$ is not a Fano manifold, we set $\underline{N}_Y = \overline{N}_Y =0$. 
\end{dfn}

\begin{exam}\label{Exam3}
Example \ref{Exam2} yields the following: 
\begin{enumerate}
\item When $X=\mathbb{P}^n$, we have $\underline{N}_X = \overline{N}_X = n$. It is the unique Fano $n$-fold $X$ satisfying $\overline{N}_X = n$ by Lemma \ref{Lem1}. 
\item When $X=Q^n$, we have $\underline{N}_X = \overline{N}_X=\lfloor \frac{n+1}{2} \rfloor$. 
\item When $X={\rm Bl}_{S_m}(\mathbb{P}^n)$, we have $\underline{N}_X = \overline{N}_X=m+1$. 
\item When $X= Q^{m+1} \times \mathbb{P}^m$, we have $\underline{N}_X =\lfloor \frac{m+2}{2} \rfloor$ and $\overline{N}_X=m$. 
\end{enumerate}
\end{exam}

\section{Bernoulli numbers and binomial coefficients}

In this section, we mention well-known classical results related to the Bernoulli numbers and binomial coefficients. 
We refer to \cite[1 and 2]{AIK} for theory of the Bernoulli numbers. 

\begin{dfn}\label{Def7}
The Bernoulli numbers $B_m$'s are defined by the formula: 
$$\frac{t}{e^t-1}=\sum_{m=0}^{\infty} \frac{B_m}{m!} t^m.$$
\end{dfn}

\begin{rmk}\label{Rmk1}
Table \ref{Table5} is the list of the Bernoulli numbers $B_m$'s for $m \le 10$. 
\begin{table}[h]
 \caption{The Bernoulli numbers.}
  \begin{tabular}{c|>{\centering}p{14pt}>{\centering}p{14pt}>{\centering}p{14pt}>{\centering}p{14pt}>{\centering}p{14pt}>{\centering}p{14pt}>{\centering}p{14pt}>{\centering}p{14pt}>{\centering}p{14pt}>{\centering}p{14pt}>{\centering}p{14pt}@{}l@{}} 
 $m$     &  $0$  & $1$  & $2$     & $3$     & $4$   & $5$   & $6$    & $7$  & $8$  & $9$  & $10$&\\[3pt] \hline
$B_m$ & $1$ & $-\frac{1}{2}$ & $\frac{1}{6}$  & $0$ & $-\frac{1}{30}$ & $0$ & $\frac{1}{42}$ & $0$ & $-\frac{1}{30}$ & $0$ & $\frac{5}{66}$&
  \end{tabular}
 \label{Table5}
\end{table}
\end{rmk}

\begin{rmk}
There is another convention on the definition of the Bernoulli numbers $\tilde{B}_m$'s:
$$\frac{te^t}{e^t-1}=\sum_{m=0}^{\infty} \frac{\tilde{B}_m}{m!} t^m.$$
Then notice that $\tilde{B}_m=(-1)^mB_m$. 
\end{rmk}

\begin{lem}\label{Lem6}
We have the following formulas for the Bernoulli numbers:
\begin{eqnarray}
B_m&=&\sum_{p=1}^m \frac{1}{p+1}\sum_{q=1}^p (-1)^q \binom{p}{q}q^m\ \ \ \text{for }m \ge 1.\\
\sum_{r=1}^{q-1} r^j&=& \frac{1}{j+1} \sum_{m=0}^j B_m \binom{j+1}{m} q^{j+1-m}.\\
\sum_{r=1}^{q} r^j&=& \frac{1}{j+1} \sum_{m=0}^j (-1)^m B_m \binom{j+1}{m} q^{j+1-m}.
\end{eqnarray}
\end{lem}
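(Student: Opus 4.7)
The plan is to prove each of the three identities by direct manipulation of the defining generating function
$t/(e^t-1) = \sum_{m \ge 0} B_m t^m/m!$
from Definition \ref{Def7}.

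For (1), I would use the substitution $u = e^t - 1$, so that $t = \ln(1+u)$, to rewrite
$$\frac{t}{e^t - 1} = \frac{\ln(1+u)}{u} = \sum_{p=0}^{\infty} \frac{(-1)^p u^p}{p+1} = \sum_{p=0}^{\infty} \frac{(-1)^p (e^t - 1)^p}{p+1}.$$
Next I would expand $(e^t - 1)^p = \sum_{q=0}^{p} \binom{p}{q} (-1)^{p-q} e^{qt}$ together with the Taylor series of $e^{qt}$, and extract the coefficient of $t^m/m!$. The sign simplification $(-1)^p (-1)^{p-q} = (-1)^q$ then gives
$$B_m = \sum_{p=0}^{\infty} \frac{1}{p+1} \sum_{q=0}^{p} (-1)^q \binom{p}{q} q^m.$$
To get the formula as stated I would invoke the classical finite-difference identity $\sum_{q=0}^p (-1)^q \binom{p}{q} q^m = 0$ whenever $p > m$ to truncate the outer sum at $p = m$, and observe that the $q=0$ term vanishes for $m \ge 1$.

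For (2), I would compute the geometric sum
$$\sum_{r=0}^{q-1} e^{rt} = \frac{e^{qt}-1}{e^t-1}$$
in two ways. The left side expands termwise as $\sum_{j \ge 0} (t^j/j!) \sum_{r=0}^{q-1} r^j$, while the right side factors as $((e^{qt}-1)/t) \cdot (t/(e^t-1))$, a product of the two power series $\sum_{k \ge 0} q^{k+1} t^k/(k+1)!$ and $\sum_{m \ge 0} B_m t^m/m!$. Comparing coefficients of $t^j$, multiplying by $j!$, and rewriting the prefactor as $j!/(m!(j+1-m)!) = \binom{j+1}{m}/(j+1)$ yields (2) once one notes that for $j \ge 1$ the lower bound $r=0$ contributes nothing. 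For (3), the identical recipe applied to
$$\sum_{r=1}^{q} e^{rt} = \frac{e^{qt}-1}{t} \cdot \frac{te^t}{e^t-1}$$
produces the analogous identity in terms of the alternative Bernoulli numbers $\tilde{B}_m = (-1)^m B_m$ from the remark immediately preceding the lemma, and substituting $\tilde B_m = (-1)^m B_m$ gives (3).

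All three steps are standard generating-function manipulations, so no serious obstacle is expected; the only mild subtlety is justifying the truncation of the outer series in (1), which rests on the finite-difference vanishing of $q \mapsto q^m$ differenced $p$ times, itself a classical identity.
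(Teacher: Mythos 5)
Your proposal is correct: the substitution $u=e^t-1$ for (1) (which is exactly the Stirling-number route, since $\sum_{q=0}^p(-1)^q\binom{p}{q}q^m=(-1)^p\,p!\,S(m,p)$) and the geometric-sum computation of $(e^{qt}-1)/(e^t-1)$ for (2) and (3) are the standard derivations, and you correctly handle the two small subtleties (truncation of the outer sum at $p=m$ via the finite-difference vanishing, and the implicit restriction $j\ge 1$ in (2) so that the $r=0$ term is harmless). The paper itself gives no proof of this lemma --- it only points to \cite{AIK} in the remark that follows --- so your argument simply supplies, in the same spirit as those references, the details the paper leaves to the literature.
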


\begin{rmk}
In Lemma \ref{Lem6}, we obtain the formula (1) by viewing the Stirling numbers (see \cite[Theorem 2.8]{AIK}). 
The equations (2) and (3) are often called the Seki-Bernoulli formulas (see \cite[1.1]{AIK}). 
\end{rmk}

\begin{lem}\label{Lem7}
We have the following formulas for binomial coefficients:
\setcounter{equation}{0}
\begin{eqnarray}
\sum_{q=1}^{p} (-1)^q \binom{p}{q} q^m&=&0 \ \ \ \text{for }1 \le m < p.\\
\sum_{q=r+1}^{p} (-1)^q \binom{p}{q} &=& (-1)^{r-1}\binom{p-1}{r}.
\end{eqnarray}
\end{lem}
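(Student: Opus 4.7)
The plan is to handle the two classical binomial identities by two short, essentially independent arguments.

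For (1), my strategy is to use the generating-function identity $(1-e^t)^p = \sum_{q=0}^p (-1)^q \binom{p}{q} e^{qt}$ and expand $e^{qt} = \sum_{m\ge 0} q^m t^m/m!$ to obtain
\[(1-e^t)^p = \sum_{m=0}^{\infty} \frac{t^m}{m!}\sum_{q=0}^{p}(-1)^q\binom{p}{q} q^m.\]
Since $(1-e^t)^p = (-t-t^2/2!-\cdots)^p$ has lowest-order term $(-1)^p t^p$, the coefficient of $t^m$ on the left vanishes for every $m<p$. As the $q=0$ summand contributes nothing when $m\ge 1$, identity (1) follows. Equivalently, (1) records the fact that the $p$-th forward difference $\Delta^p$ of a polynomial of degree less than $p$ is zero, which could serve as an alternative proof.

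For (2), I would first exploit $\sum_{q=0}^{p}(-1)^q\binom{p}{q} = (1-1)^p = 0$ to rewrite the target as $-\sum_{q=0}^{r}(-1)^q\binom{p}{q}$, reducing the problem to the partial-sum identity $\sum_{q=0}^{r}(-1)^q\binom{p}{q} = (-1)^r\binom{p-1}{r}$. The latter admits a one-line telescoping derivation via Pascal's rule $\binom{p}{q} = \binom{p-1}{q-1} + \binom{p-1}{q}$: after substitution and a shift of index in the sum carrying $\binom{p-1}{q-1}$, the contributions from the two pieces cancel in pairs except for the top term, leaving precisely $(-1)^r \binom{p-1}{r}$. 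Negating yields (2). A clean alternative is induction on $r$.

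Neither part presents a genuine obstacle; both identities are entirely standard. The only real decision is which of several equivalent derivations (generating functions, finite differences, Pascal's rule, or induction) to record, keeping the exposition compact enough to serve as a ready reference for the Chern-character computations of Section 4.
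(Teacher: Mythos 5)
Your proof is correct. For identity (2) you argue exactly as the paper does (the paper's remark after the lemma says to "check the formula by using Pascal's rule"), except that you first reduce to the complementary partial sum $\sum_{q=0}^{r}(-1)^q\binom{p}{q}=(-1)^r\binom{p-1}{r}$ via $(1-1)^p=0$ before telescoping; this is a cosmetic difference and the sign bookkeeping checks out. For identity (1) the paper instead observes that $\frac{d^m}{dt^m}(1+t)^p$ vanishes at $t=-1$ for $m<p$; that argument first produces the falling-factorial sums $\sum_q(-1)^q\binom{p}{q}\,q(q-1)\cdots(q-m+1)=0$ and one then passes to the power sums $\sum_q(-1)^q\binom{p}{q}q^m$ by taking linear combinations over lower orders. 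Your expansion of $(1-e^t)^p$ reaches the power sums directly, since the coefficient of $t^m/m!$ is exactly $\sum_q(-1)^q\binom{p}{q}q^m$ while the left-hand side has a zero of order $p$ at $t=0$; it is the same "vanishing to order $p$" phenomenon (equivalently, the $p$-th finite difference annihilating polynomials of degree $<p$, as you note), packaged so that no conversion between falling factorials and powers is needed. Either route is fully adequate for the purposes of Section 4.
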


\begin{rmk}
In Lemma \ref{Lem7}, the formula (1) follows from the fact that $\frac{d^mf}{dt^m}(-1)=0$ for the polynomial $f(t):=(1+t)^p$. 
We can check the formula (2) by using Pascal's rule: $\binom{p-1}{q}+\binom{p-1}{q-1}=\binom{p}{q}$. 
\end{rmk}

\begin{ntt}\label{Def9}
For positive integers $m$ and $p$, we set:
$$c_{(m,p)}:=\sum_{q=1}^p (-1)^q \binom{p}{q}q^m.$$
\end{ntt}

\begin{rmk}\label{Rmk3}
Lemma \ref{Lem7}(1) says that $c_{(m,p)}=0$ for $m<p$, and Lemma \ref{Lem6}(1) yields: 
$$B_m=\sum_{p=1}^m \frac{c_{(m,p)}}{p+1}.$$
Table \ref{Table6} is the list of $c_{(m,p)}$ for small $m$ and $p$. 
\begin{table}[h]
 \caption{$c_{(m,p)}$.}
  \begin{tabular}{c|>{\centering}p{25pt}>{\centering}p{25pt}>{\centering}p{25pt}>{\centering}p{25pt}>{\centering}p{25pt}>{\centering}p{25pt}@{}l@{}} 
 $p$     &  $1$  & $2$     & $3$     & $4$   & $5$   & $6$&      \\[3pt] \hline
$c_{(1,p)}$ & $-1$ & $0$ & $0$ & $0$ & $0$ & $0$&\\[3pt]
$c_{(2,p)}$ & $-1$ & $2$ & $0$ & $0$ & $0$ & $0$&\\[3pt]
$c_{(3,p)}$ & $-1$ & $6$ & $-6$ & $0$ & $0$ & $0$&\\[3pt]
$c_{(4,p)}$ & $-1$ & $14$ & $-36$ & $24$ & $0$ & $0$&\\[3pt]
$c_{(5,p)}$ & $-1$ & $30$ & $-150$ & $240$ & $-120$ & $0$&\\[3pt]
$c_{(6,p)}$ & $-1$ & $62$ & $-540$ & $1560$ & $-1800$ & $720$&
  \end{tabular}
\label{Table6}
\end{table}
\end{rmk}

\section{The Chern characters of higher order minimal families}

In this section, let $X$ be a Fano manifold, and suppose that there is a chain of families $X \vdash H_1 \vdash \cdots \vdash H_N$. 
Let $L_i$ be as in Definition \ref{Def6}. 
The goal of this section is to calculate the Chern characters of $H_i$ by applying Lemma \ref{Lem5}. 

\begin{ntt}\label{Ntt10}
We denote by $d_i$ the dimension of $H_i$, and by $a_i$ the intersection number of $L_{i-1}$ and a curve parametrized by $H_i$. 
\end{ntt}

\begin{exam}\label{Exam1}
We calculate $c_1(H_3)$ under the assumption $a_2=a_3=1$. 
By Lemma \ref{Lem5}, we know:
\begin{eqnarray*}
c_1(H_1)&=&-c_1(L_1)+\frac12 T(c_1(X))c_1(L_1)+T({\rm ch}_2(X)),\\
{\rm ch}_2(H_1)&=&-\frac12 c_1(L_1)^2+\frac{1}{12} T(c_1(X))c_1(L_1)^2+\frac12 T({\rm ch}_2(X))\cdot c_1(L_1)+T({\rm ch}_3(X)),\\
{\rm ch}_3(H_1)&=&-\frac16 c_1(L_1)^3+\frac{1}{12} T({\rm ch}_2(X))\cdot c_1(L_1)^2+\frac12T({\rm ch}_3(X))\cdot c_1(L_1) +T({\rm ch}_4(X)).
\end{eqnarray*}
Since $a_2=1$, Lemma \ref{Lem4} yields the following formulas: 
\begin{eqnarray*}
c_1(H_2)&=&-c_1(L_2)+\frac12 T(c_1(H_1))c_1(L_2)+T({\rm ch}_2(H_1))\\
&=&-2c_1(L_2)+\frac13 T(c_1(X))c_1(L_2)+T^2({\rm ch}_2(X))c_1(L_2)+T^2({\rm ch}_3(X)),\\
{\rm ch}_2(H_2)&=&-\frac12 c_1(L_2)^2+\frac{1}{12} T(c_1(H_1))c_1(L_2)^2+\frac12 T({\rm ch}_2(H_1))\cdot c_1(L_2)+T({\rm ch}_3(H_1))\\
&=&-c_1(L_2)^2+\frac{1}{12} T(c_1(X))c_1(L_2)^2+\frac{5}{12}T^2({\rm ch}_2(X))c_1(L_2)^2\\
&&+T^2({\rm ch}_3(X))\cdot c_1(L_2)+T^2({\rm ch}_4(X)).
\end{eqnarray*}
Furthermore, since $a_3=1$, we have: 
\begin{eqnarray*}
c_1(H_3)&=&-c_1(L_3)+\frac12 T(c_1(H_2))c_1(L_3)+T({\rm ch}_2(H_2))\\
&=&-3c_1(L_3)+\frac14 T(c_1(X))c_1(L_3)+\frac{11}{12}T^2({\rm ch}_2(X))c_1(L_3)\\
&&+\frac32T^3({\rm ch}_3(X))c_1(L_3)+T^3({\rm ch}_4(X)).
\end{eqnarray*}
\end{exam}

We generalize Example \ref{Exam1} as follows: 

\begin{ntt}
For any integers $i\ge 1$, $j \ge 1$, and $0 \le k \le i+j$, we define $\alpha_{(i,j,k)} \in N^j(H_i)_{\mathbb{R}}$ and $b_{(i,j,k)} \in \mathbb{R}$ as follows: 
\begin{eqnarray*}
\alpha_{(i,j,k)}&:=&\begin{cases}c_1(L_i)^j & \text{if}\ \ k=0\\
T^k({\rm ch}_k(X))c_1(L_i)^j & \text{if}\ \ 1 \le k \le i\\
T^i({\rm ch}_k(X))\cdot c_1(L_i)^{i+j-k} & \text{if}\ \ i<k \le i+j\end{cases},\\
b_{(1,j,k)}&:=&\begin{cases}-\frac{1}{j!} & \text{if}\ \ k=0\\
\frac{(-1)^{j+1-k} B_{j+1-k}}{(j+1-k)!} & \text{if}\ \ k \ge 1\end{cases},\\
b_{(i,j,k)}&:=&\sum_{m=0}^{\text{min}\{j,\,i+j-k\}}\frac{(-1)^m B_m}{m!}b_{(i-1,j+1-m,k)}-\begin{cases}\frac{1}{j!} & \text{if}\ \ k=0\\
0 & \text{if}\ \ k \ge 1\end{cases}\ \ \ \text{for }i \ge 2.
\end{eqnarray*}
\end{ntt}

\begin{prop}\label{Prop1}
We assume $a_2=\cdots =a_i=1$. Then we have: 
$${\rm ch}_j(H_i)=\sum_{k=0}^{i+j}b_{(i,j,k)}\alpha_{(i,j,k)}.$$
\end{prop}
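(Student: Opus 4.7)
The plan is to prove the formula by induction on $i$, with the base case $i=1$ being essentially a rewriting of Lemma \ref{Lem5}. When $i=1$, Lemma \ref{Lem5} reads
$$\mathrm{ch}_j(H_1) = \sum_{m=0}^{j}\frac{(-1)^m B_m}{m!} T(\mathrm{ch}_{j+1-m}(X)) \cdot c_1(L_1)^m - \frac{1}{j!}c_1(L_1)^j.$$
The term $m=j$ involves $T(\mathrm{ch}_1(X)) \cdot c_1(L_1)^j$, matching the shape $\alpha_{(1,j,1)}$; the terms with $m<j$ involve $T(\mathrm{ch}_{j+1-m}(X))\cdot c_1(L_1)^m$, matching $\alpha_{(1,j,k)}$ with $k=j+1-m > 1$; and the last summand matches $\alpha_{(1,j,0)}$. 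Reindexing $k = j+1-m$ makes the coefficients coincide with $b_{(1,j,k)}$ as defined.

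For the inductive step, apply Lemma \ref{Lem5} one more time to the pair $H_{i-1} \vdash H_i$ to obtain
$$\mathrm{ch}_j(H_i) = \sum_{m=0}^{j}\frac{(-1)^m B_m}{m!} T(\mathrm{ch}_{j+1-m}(H_{i-1})) \cdot c_1(L_i)^m - \frac{1}{j!}c_1(L_i)^j,$$
then plug in the inductive formula $\mathrm{ch}_{j+1-m}(H_{i-1}) = \sum_{k=0}^{i+j-m} b_{(i-1,j+1-m,k)} \alpha_{(i-1,j+1-m,k)}$. The next task is to push each $\alpha_{(i-1,j+1-m,k)}$ onto $H_i$, that is, to compute $T(\alpha_{(i-1,j+1-m,k)})\cdot c_1(L_i)^m$. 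Using the hypothesis $a_i=1$ together with Lemma \ref{Lem4}(1) and (2) (which commute scalars past $T$ and convert $T(c_1(L_{i-1})^{\bullet})$ into $c_1(L_i)^{\bullet-1}$), one checks the key identity
$$T(\alpha_{(i-1,j+1-m,k)})\cdot c_1(L_i)^m = \alpha_{(i,j,k)}$$
in all three regimes: $k=0$, $1 \le k \le i-1$ (where $T^k(\mathrm{ch}_k(X))$ is a scalar), $k=i$ (the boundary where $T$ acts nontrivially on $T^{i-1}(\mathrm{ch}_i(X)) \in N^1(H_{i-1})$, turning it into $T^i(\mathrm{ch}_i(X))$), and $i<k\le i+j-m$ (where $T^{i-1}$ upgrades to $T^i$ and the $c_1(L_{i-1})$-power index shifts by one). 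The assumption $a_i = 1$ is essential: otherwise factors $a_i^{j-m}$ or $a_i^{i+j-m-k}$ would appear, changing the formula.

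Once this push-forward identity is in place, swapping the order of summation yields
$$\mathrm{ch}_j(H_i) = \sum_{k=0}^{i+j} \alpha_{(i,j,k)} \sum_{m=0}^{\min\{j,\,i+j-k\}} \frac{(-1)^m B_m}{m!}b_{(i-1,j+1-m,k)} - \frac{1}{j!}c_1(L_i)^j,$$
where the $m$-range is precisely what is needed so that $k \le i-1+(j+1-m)$, i.e., the inductive term $b_{(i-1,j+1-m,k)}$ is defined. The bracketed sum is exactly the definition of $b_{(i,j,k)}$ for $k\ge 1$, and for $k=0$ it matches after absorbing the leftover $-\tfrac{1}{j!}$ from the last term.

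The main obstacle is not any deep input but the bookkeeping: keeping straight the three cases in the definition of $\alpha_{(i,j,k)}$, correctly handling the transition at $k=i$ where the $T$-power saturates, and verifying that the upper summation bound $\min\{j,i+j-k\}$ matches on both sides. Once the push-forward identity is verified case-by-case, the coefficient comparison is a clean rewriting, and the induction closes.
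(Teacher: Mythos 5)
Your proposal is correct and follows essentially the same route as the paper: base case from Lemma \ref{Lem5}, then the inductive step via Lemma \ref{Lem5} applied to $H_{i-1}\vdash H_i$, the push-forward identity $T(\alpha_{(i-1,j+1-m,k)})\cdot c_1(L_i)^m=\alpha_{(i,j,k)}$ from Lemma \ref{Lem4} and $a_i=1$, and a swap of summation order. The only difference is that you spell out the case-by-case verification of the push-forward identity, which the paper leaves as ``easy to see.''
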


\begin{proof}
The case $i=1$ follows from Lemma \ref{Lem5} directly, so we assume $i \ge 2$. 
Then, by Lemma \ref{Lem5} again, 
$${\rm ch}_j(H_i)=\sum_{m=0}^j \frac{(-1)^m B_m}{m!}T({\rm ch}_{j+1-m}(H_{i-1})) \cdot c_1(L_i)^m - \frac{1}{j!}c_1(L_i)^j.$$
The inductive hypothesis yields: 
$$T({\rm ch}_{j+1-m}(H_{i-1})) \cdot c_1(L_i)^m=\sum_{k=0}^{i+j-m}b_{(i-1,j+1-m,k)}T(\alpha_{(i-1,j+1-m,k)})\cdot c_1(L_i)^m.$$
Since $a_i=1$, it is easy to see that
$T(\alpha_{(i-1,j+1-m,k)})\cdot c_1(L_i)^m=\alpha_{(i,j,k)}$ by Lemma \ref{Lem4}. 
Thus, we have: 
\begin{eqnarray*}
{\rm ch}_j(H_i)&=&\sum_{k=0}^{i+j}\biggl\{\sum_{m=0}^{\text{min}\{j,\,i+j-k\}}\frac{(-1)^m B_m}{m!}b_{(i-1,j+1-m,k)} \biggr\}\alpha_{(i,j,k)} - \frac{1}{j!}c_1(L_i)^j\\
&=&\sum_{k=0}^{i+j}b_{(i,j,k)}\alpha_{(i,j,k)}.
\end{eqnarray*}
\end{proof}

\begin{rmk}\label{Rmk2}
Tables \ref{Table1}-\ref{Table4} are the lists of $b_{(i,j,k)}$ for small $i$ and $j$. (The author used a computer.)
\begin{table}[h]
\caption{$b_{(i,1,k)}$.}
\begin{tabular}{c|>{\centering}p{23pt}>{\centering}p{23pt}>{\centering}p{23pt}>{\centering}p{23pt}>{\centering}p{23pt}>{\centering}p{23pt}>{\centering}p{23pt}>{\centering}p{23pt}>{\centering}p{23pt}>{\centering}p{23pt}>{\centering}p{23pt}@{}l@{}} 
 $k$     &  $0$  & $1$  & $2$     & $3$     & $4$   & $5$   & $6$    & $7$     & $8$     & $9$   & $10$&\\[3pt] \hline
$b_{(1,1,k)}$ & $-1$ & $\frac{1}{2}$ & $1$&\\[3pt]
$b_{(2,1,k)}$ & $-2$ & $\frac{1}{3}$ & $1$ & $1$&\\[3pt]
$b_{(3,1,k)}$  & $-3$ & $\frac{1}{4}$ & $\frac{11}{12}$ & $\frac{3}{2}$ & $1$&\\[3pt]
$b_{(4,1,k)}$  & $-4$ & $\frac{1}{5}$ & $\frac{5}{6}$ & $\frac{7}{4}$ & $2$ & $1$&\\[3pt]
$b_{(5,1,k)}$  & $-5$ & $\frac{1}{6}$ & $\frac{137}{180}$ & $\frac{15}{8}$ & $\frac{17}{6}$ & $\frac{5}{2}$ & $1$&\\[3pt]
$b_{(6,1,k)}$  & $-6$ & $\frac{1}{7}$ & $\frac{7}{10}$ & $\frac{29}{15}$ & $\frac{7}{2}$ & $\frac{25}{6}$ & $3$ & $1$&\\[15pt]
 \end{tabular}
\label{Table1}

\caption{$b_{(i,2,k)}$.}
\begin{tabular}{c|>{\centering}p{23pt}>{\centering}p{23pt}>{\centering}p{23pt}>{\centering}p{23pt}>{\centering}p{23pt}>{\centering}p{23pt}>{\centering}p{23pt}>{\centering}p{23pt}>{\centering}p{23pt}>{\centering}p{23pt}>{\centering}p{23pt}@{}l@{}} 
 $k$     &  $0$  & $1$  & $2$     & $3$     & $4$   & $5$   & $6$    & $7$     & $8$     & $9$   & $10$&\\[3pt] \hline
$b_{(1,2,k)}$ & $-\frac{1}{2}$ & $\frac{1}{12}$ & $\frac{1}{2}$ & $1$&\\[3pt]
$b_{(2,2,k)}$ & $-1$ & $\frac{1}{12}$ & $\frac{5}{12}$ & $1$ & $1$&\\[3pt]
$b_{(3,2,k)}$  & $-\frac{3}{2}$ & $\frac{3}{40}$ & $\frac{3}{8}$ & $1$ & $\frac{3}{2}$ & $1$&\\[3pt]
$b_{(4,2,k)}$  & $-2$ & $\frac{1}{15}$ & $\frac{31}{90}$ & $1$ & $\frac{11}{6}$ & $2$ & $1$&\\[3pt]
$b_{(5,2,k)}$  & $-\frac{5}{2}$ & $\frac{5}{84}$ & $\frac{23}{72}$ & $\frac{239}{240}$ & $\frac{25}{12}$ & $\frac{35}{12}$ & $\frac{5}{2}$ & $1$&\\[3pt]
$b_{(6,2,k)}$  & $-3$ & $\frac{3}{56}$ & $\frac{167}{560}$ & $\frac{79}{80}$ & $\frac{547}{240}$ & $\frac{15}{4}$ & $\frac{17}{4}$ & $3$ & $1$&\\[15pt]          
\end{tabular}
\label{Table2}

\caption{$b_{(i,3,k)}$.}
\begin{tabular}{c|>{\centering}p{23pt}>{\centering}p{23pt}>{\centering}p{23pt}>{\centering}p{23pt}>{\centering}p{23pt}>{\centering}p{23pt}>{\centering}p{23pt}>{\centering}p{23pt}>{\centering}p{23pt}>{\centering}p{23pt}>{\centering}p{23pt}@{}l@{}} 
 $k$     &  $0$  & $1$  & $2$     & $3$     & $4$   & $5$   & $6$    & $7$     & $8$     & $9$   & $10$&\\[3pt] \hline
$b_{(1,3,k)}$ & $-\frac{1}{6}$ & $0$ & $\frac{1}{12}$ & $\frac{1}{2}$ & $1$& \\[3pt]
$b_{(2,3,k)}$ & $-\frac{1}{3}$ & $\frac{1}{180}$ & $\frac{1}{12}$ & $\frac{5}{12}$ & $1$ & $1$& \\[3pt]
$b_{(3,3,k)}$  & $-\frac{1}{2}$ & $\frac{1}{120}$ & $\frac{29}{360}$ & $\frac{3}{8}$ & $1$ & $\frac{3}{2}$ & $1$& \\[3pt]
$b_{(4,3,k)}$  & $-\frac{2}{3}$ & $\frac{1}{105}$ & $\frac{7}{90}$ & $\frac{7}{20}$ & $1$ & $\frac{11}{6}$ & $2$ & $1$& \\[3pt]
$b_{(5,3,k)}$  & $-\frac{5}{6}$ & $\frac{5}{504}$ & $\frac{227}{3024}$ & $\frac{1}{3}$ & $\frac{721}{720}$ & $\frac{25}{12}$ & $\frac{35}{12}$ & $\frac{5}{2}$ & $1$& \\[3pt]
$b_{(6,3,k)}$  & $-1$ & $\frac{5}{504}$ & $\frac{73}{1008}$ & $\frac{607}{1890}$ & $\frac{241}{240}$ & $\frac{329}{144}$ & $\frac{15}{4}$ & $\frac{17}{4}$ & $3$ & $1$&
\end{tabular}
\label{Table3}
\end{table}

\begin{table}[h]
\caption{$b_{(i,4,k)}$.}
\begin{tabular}{c|>{\centering}p{23pt}>{\centering}p{23pt}>{\centering}p{23pt}>{\centering}p{23pt}>{\centering}p{23pt}>{\centering}p{23pt}>{\centering}p{23pt}>{\centering}p{23pt}>{\centering}p{23pt}>{\centering}p{23pt}>{\centering}p{23pt}@{}l@{}} 
 $k$     &  $0$  & $1$  & $2$     & $3$     & $4$   & $5$   & $6$    & $7$     & $8$     & $9$   & $10$&\\[3pt] \hline
$b_{(1,4,k)}$ & $-\frac{1}{24}$ & $-\frac{1}{720}$ & $0$ & $\frac{1}{12}$ & $\frac{1}{2}$ & $1$& \\[3pt]
$b_{(2,4,k)}$ & $-\frac{1}{12}$ & $-\frac{1}{720}$ & $\frac{1}{240}$ & $\frac{1}{12}$ & $\frac{5}{12}$ & $1$ & $1$& \\[3pt]
$b_{(3,4,k)}$ & $-\frac{1}{8}$  & $-\frac{1}{1120}$ & $\frac{1}{160}$ & $\frac{19}{240}$ & $\frac{3}{8}$ & $1$ & $\frac{3}{2}$ & $1$& \\[3pt]
$b_{(4,4,k)}$ & $-\frac{1}{6}$  & $-\frac{1}{2520}$ & $\frac{113}{15120}$ & $\frac{3}{40}$ & $\frac{251}{720}$ & $1$ & $\frac{11}{6}$ & $2$ & $1$& \\[3pt]
$b_{(5,4,k)}$ & $-\frac{5}{24}$  & $0$ & $\frac{25}{3024}$ & $\frac{865}{12096}$ & $\frac{95}{288}$ & $1$ & $\frac{25}{12}$ & $\frac{35}{12}$ & $\frac{5}{2}$ & $1$& \\[3pt]
$b_{(6,4,k)}$ & $-\frac{1}{4}$  & $\frac{1}{3360}$ & $\frac{887}{100800}$ & $\frac{277}{4032}$ & $\frac{19097}{60480}$ & $1$ & $\frac{137}{60}$ & $\frac{15}{4}$ & $\frac{17}{4}$ & $3$ & $1$ &
\end{tabular}
\label{Table4}
\end{table}

First, we focus on the case $k=0$, then we immediately find the following formulas for $i \le 6$: 
\begin{eqnarray*}
b_{(i,1,0)}&=&-i,\\
b_{(i,2,0)}&=&-\frac{i}{2},\\
b_{(i,3,0)}&=&-\frac{i}{6},\\
b_{(i,4,0)}&=&-\frac{i}{24}.
\end{eqnarray*}
So we can expect: 
$$b_{(i,j,0)}=-\frac{i}{j!}.$$
Next, we view the case $k=1$, then it is easy to check the following equations for $i \le 6$: 
\begin{eqnarray*}
b_{(i,1,1)}&=&\frac{1}{i+1},\\
b_{(i,2,1)}&=&\frac{i}{2(i+1)(i+2)}=\frac{1}{2!}\biggl(\frac{-1}{i+1}+\frac{2}{i+2}\biggr),\\
b_{(i,3,1)}&=&\frac{i^2-i}{6(i+1)(i+2)(i+3)}=\frac{1}{3!}\biggl(\frac{1}{i+1}+\frac{-6}{i+2}+\frac{6}{i+3}\biggr),\\
b_{(i,4,1)}&=&\frac{i^3-5i^2}{24(i+1)(i+2)(i+3)(i+4)}=\frac{1}{4!}\biggl(\frac{-1}{i+1}+\frac{14}{i+2}+\frac{-36}{i+3}+\frac{24}{i+4}\biggr).
\end{eqnarray*}
By comparing with Table \ref{Table6} in Remark \ref{Rmk3}, we can expect: 
$$b_{(i,j,1)}=\frac{(-1)^j}{j!}\sum_{p=1}^j\frac{c_{(j,p)}}{i+p}.$$
We will prove the above formulas in Proposition \ref{Prop2}. 
\end{rmk}

\begin{prop}\label{Prop2}
For any positive integers $i$ and $j$, 
\setcounter{equation}{0}
\begin{eqnarray}
b_{(i,j,0)}&=&-\frac{i}{j!},\\
b_{(i,j,1)}&=&\frac{(-1)^j}{j!}\sum_{p=1}^j\frac{c_{(j,p)}}{i+p}.
\end{eqnarray}
\end{prop}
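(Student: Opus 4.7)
The plan is to prove both formulas simultaneously by induction on $i$. The base case $i = 1$ is just a check against the definition: $b_{(1,j,0)} = -1/j!$ is immediate, while $b_{(1,j,1)} = (-1)^j B_j/j!$ combined with the identity $B_j = \sum_{p=1}^j c_{(j,p)}/(p+1)$ from Remark \ref{Rmk3} gives exactly the claimed right-hand side for (2).

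For the inductive step of (1), substituting $b_{(i-1,j+1-m,0)} = -(i-1)/(j+1-m)!$ into the recursion reduces the claim to the identity $\sum_{m=0}^j (-1)^m B_m \binom{j+1}{m} = j+1$, which follows immediately from setting $q = 1$ in the Seki-Bernoulli formula (Lemma \ref{Lem6}(3)). A short computation then yields $b_{(i,j,0)} = -(i-1)/j! - 1/j! = -i/j!$.

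Formula (2) is the real work. After substituting the inductive hypothesis into the recursion, swapping the order of summation, and shifting indices by $P = p - 1$, one can rewrite
$$b_{(i,j,1)} = \frac{(-1)^{j+1}}{(j+1)!} \sum_{P=0}^{j} \frac{1}{i+P}\, S_P, \qquad S_P := \sum_{k=0}^{j-P} \binom{j+1}{k} B_k\, c_{(j+1-k,\,P+1)}.$$
Thus the proof reduces to showing $S_0 = 0$ for $j \ge 1$ and $S_P = -(j+1)\, c_{(j,P)}$ for $1 \le P \le j$. The case $P = 0$ collapses (since $c_{(n,1)} = -1$) to $-\sum_{k=0}^{j} \binom{j+1}{k} B_k = 0$, a standard Bernoulli identity.

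For $P \ge 1$ I would expand $c_{(j+1-k,\,P+1)}$ by its definition, swap the sums, and recognize the inner sum as a truncated Bernoulli-polynomial expression $\sum_{k=0}^{j-P} \binom{j+1}{k} B_k q^{j+1-k}$. Writing this as $B_{j+1}(q) - \sum_{\ell=0}^{P} \binom{j+1}{\ell} B_{j+1-\ell} q^{\ell}$ and using Lemma \ref{Lem7}(1) to kill every $c_{(\ell,P+1)}$ with $1 \le \ell < P+1$, only an $\ell = 0$ tail contribution of $-B_{j+1}$ survives. Substituting $B_{j+1}(q) = B_{j+1} + (j+1)\sum_{r=1}^{q-1} r^j$ (equivalent to Lemma \ref{Lem6}(2)) and swapping the remaining sum to $r$ first, Lemma \ref{Lem7}(2) identifies the key piece as $-c_{(j,P)}$. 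The two stray $B_{j+1}$ contributions then cancel exactly, leaving $S_P = -(j+1)\, c_{(j,P)}$ as required. The main obstacle is precisely this delicate cancellation: the truncation $k \le j - P$ obstructs a direct invocation of the clean Bernoulli-polynomial identities, and one has to be careful that the $B_{j+1}$ correction from extending the sum and the $B_{j+1}$ appearing in $B_{j+1}(q)$ annihilate each other at the very end.
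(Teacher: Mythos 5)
Your proof is correct and follows essentially the same route as the paper's: induction on $i$, with (1) reduced to the $q=1$ instance of Lemma \ref{Lem6}(3), and (2) reduced---after expanding $c_{(j+1-k,P+1)}$, applying the Seki--Bernoulli formula, swapping the $q$- and $r$-sums, and invoking Lemma \ref{Lem7}(2)---to showing $S_0=0$ and $S_P=-(j+1)c_{(j,P)}$. The one avoidable detour is your passage through the Bernoulli polynomial $B_{j+1}(q)$: since $c_{(j+1-k,P+1)}=0$ for $k>j-P$ by Lemma \ref{Lem7}(1), the truncated sum $S_P$ extends for free to $\sum_{k=0}^{j}\binom{j+1}{k}B_k\,c_{(j+1-k,P+1)}$, after which Lemma \ref{Lem6}(2) applies directly and the ``delicate cancellation'' of the two $B_{j+1}$ terms never arises---this is in effect what the paper does by enlarging the range of the $p$-summation before swapping.
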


\begin{proof}
First, we show (1) by induction on $i$. 
The case $i=1$ follows from the definition of $b_{(1,j,0)}$ directly, so we assume $i \ge 2$. 
Then, by the inductive hypothesis and Lemma \ref{Lem6}(3), we have:
\begin{eqnarray*}
b_{(i,j,0)}&=&\sum_{m=0}^j\frac{(-1)^m B_m}{m!}b_{(i-1,j+1-m,0)}-\frac{1}{j!}\\
&=&-\sum_{m=0}^j\frac{(-1)^m B_m}{m!}\cdot\frac{i-1}{(j+1-m)!}-\frac{1}{j!}\\
&=&-\frac{i-1}{j!}\cdot \frac{1}{j+1}\sum_{m=0}^j(-1)^m B_m\binom{j+1}{m}-\frac{1}{j!}\\
&=&-\frac{i}{j!}.
\end{eqnarray*}

Next, we prove (2) by induction on $i$ again. 
In case $i=1$, by Remark \ref{Rmk3}, 
$$b_{(1,j,1)}=\frac{(-1)^j B_j}{j!}=\frac{(-1)^j}{j!}\sum_{p=1}^j\frac{c_{(j,p)}}{1+p}.$$
We suppose $i \ge 2$. 
Then the inductive hypothesis implies: 
\begin{eqnarray*}
b_{(i,j,1)}&=&\sum_{m=0}^j\frac{(-1)^m B_m}{m!}b_{(i-1,j+1-m,1)}\\
&=&\sum_{m=0}^j\frac{(-1)^m B_m}{m!}\cdot\frac{(-1)^{j+1-m}}{(j+1-m)!}\sum_{p=1}^{j+1-m}\frac{c_{(j+1-m,p)}}{i-1+p}.
\end{eqnarray*}
By Lemma \ref{Lem7}(1), 
$$\sum_{p=1}^{j+1-m}\frac{c_{(j+1-m,p)}}{i-1+p}=\sum_{p=1}^{j+1}\frac{c_{(j+1-m,p)}}{i-1+p}=\sum_{p=1}^{j+1}\frac{1}{i-1+p}\sum_{q=1}^p (-1)^q \binom{p}{q}q^{j+1-m}.$$
Thus, by Lemmas \ref{Lem6}(2) and \ref{Lem7}(2), we have:
\begin{eqnarray*}
b_{(i,j,1)}&=&\frac{(-1)^{j+1}}{j!}\sum_{p=1}^{j+1}\frac{1}{i-1+p}\sum_{q=1}^p (-1)^q \binom{p}{q}\frac{1}{j+1}\sum_{m=0}^j B_m\binom{j+1}{m} q^{j+1-m}\\
&=&\frac{(-1)^{j+1}}{j!}\sum_{p=2}^{j+1}\frac{1}{i-1+p}\sum_{q=2}^p (-1)^q \binom{p}{q}\sum_{r=1}^{q-1}r^j\\
&=&\frac{(-1)^{j+1}}{j!}\sum_{p=2}^{j+1}\frac{1}{i-1+p}\sum_{r=1}^{p-1}r^j\sum_{q=r+1}^p (-1)^q \binom{p}{q}\\
&=&\frac{(-1)^{j}}{j!}\sum_{p=2}^{j+1}\frac{1}{i-1+p}\sum_{r=1}^{p-1}r^j(-1)^r \binom{p-1}{r}\\
&=&\frac{(-1)^{j}}{j!}\sum_{p=1}^{j}\frac{c_{(j,p)}}{i+p}.
\end{eqnarray*}
\end{proof}

Recall that $T(c_1(X))=d_1+2$ by Lemma \ref{Lem4}(1). 
Therefore, Propositions \ref{Prop1} and \ref{Prop2} induce the following formulas:

\begin{cor}\label{Cor1}
We assume $a_2=\cdots=a_i=1$. Then we have: 
\setcounter{equation}{0}
\begin{eqnarray}
c_1(H_i)&=&\biggl\{-i+\frac{d_1+2}{i+1}\biggr\}c_1(L_i) +\sum_{k=2}^{i+1}b_{(i,1,k)}\alpha_{(i,1,k)},\\
{\rm ch}_2(H_i)&=&\biggl\{-\frac{i}{2}+\frac{i(d_1+2)}{2(i+1)(i+2)}\biggr\}c_1(L_i)^2+\sum_{k=2}^{i+2}b_{(i,2,k)}\alpha_{(i,2,k)}.
\end{eqnarray}
\end{cor}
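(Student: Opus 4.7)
The plan is to derive both identities as direct specializations of Proposition \ref{Prop1} at $j=1$ and $j=2$, isolating the $k=0$ and $k=1$ terms in the sum and evaluating their coefficients explicitly via Proposition \ref{Prop2}.

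First, I would unpack the definition of $\alpha_{(i,j,k)}$ in the two smallest cases: $\alpha_{(i,j,0)} = c_1(L_i)^j$ directly from the definition, and $\alpha_{(i,j,1)} = T^1({\rm ch}_1(X))\, c_1(L_i)^j$. Since $T(c_1(X))$ lies in $N^0(H_1)_{\mathbb{R}}$ and equals $d_1+2$ by Lemma \ref{Lem4}(1), we may treat it as the scalar $d_1+2$, so that $\alpha_{(i,j,1)} = (d_1+2)\, c_1(L_i)^j$.

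Next, I would evaluate the two coefficients using Proposition \ref{Prop2}. Part (1) gives $b_{(i,j,0)} = -i/j!$, specializing to $-i$ for $j=1$ and $-i/2$ for $j=2$. Part (2) gives $b_{(i,j,1)} = \frac{(-1)^j}{j!} \sum_{p=1}^j \frac{c_{(j,p)}}{i+p}$; reading $c_{(1,1)} = -1$ and $c_{(2,1)} = -1$, $c_{(2,2)} = 2$ off Table \ref{Table6}, one finds $b_{(i,1,1)} = \frac{1}{i+1}$ and $b_{(i,2,1)} = \frac{1}{2}\bigl(\frac{-1}{i+1} + \frac{2}{i+2}\bigr) = \frac{i}{2(i+1)(i+2)}$.

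Substituting these into ${\rm ch}_j(H_i) = \sum_{k=0}^{i+j} b_{(i,j,k)} \alpha_{(i,j,k)}$ from Proposition \ref{Prop1}, then collecting the $k=0$ and $k=1$ contributions as the leading bracketed terms while leaving the sum over $k \ge 2$ intact, yields the two stated formulas. There is no real obstacle: the substantive content has already been placed in Propositions \ref{Prop1} and \ref{Prop2}, and the corollary is essentially a bookkeeping consequence of them. A cross-check against Tables \ref{Table1} and \ref{Table2} for small $i$ should confirm the arithmetic.
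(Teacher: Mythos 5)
Your proposal is correct and matches the paper's own (essentially unwritten) derivation: the paper likewise presents the corollary as an immediate consequence of Propositions \ref{Prop1} and \ref{Prop2} together with the identity $T(c_1(X))=d_1+2$ from Lemma \ref{Lem4}(1), which lets one absorb the $k=0$ and $k=1$ terms into the bracketed coefficients of $c_1(L_i)^j$. Your explicit evaluation of $b_{(i,1,1)}$ and $b_{(i,2,1)}$ via $c_{(1,1)}=-1$, $c_{(2,1)}=-1$, $c_{(2,2)}=2$ is exactly the intended bookkeeping.
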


We will use the following proposition in order to show the assumption of Corollary \ref{Cor1}. 

\begin{prop}\label{Prop3}
Let $X$ be a Fano manifold, and $(H_1, L_1)$ a polarized minimal family associated to $X$. 
\begin{enumerate}
\item Suppose that  ${\rm ch}_2(X) \ge 0$ and $d_1 \ge 1$. Then $H_1$ is a Fano manifold. (See also \cite[Theorem 1.4(2)]{AC1})
\item Suppose that ${\rm ch}_2(X) > 0$, $d_1 \ge 1$, and $H_1 \not\cong \mathbb{P}^{d_1}$. 
Then $a_2=1$ for every second family $H_1 \vdash H_2$. (See also \cite[Lemma 4.5(1)]{AC1}.)
\item Suppose that ${\rm ch}_2(X) \ge 0$, $d_1 \ge 5$, and $H_1 \not\cong \mathbb{P}^{d_1}, Q^{d_1}$. 
Then $a_2=1$ for every second family $H_1 \vdash H_2$.  
\end{enumerate}
\end{prop}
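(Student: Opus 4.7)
The plan is to control $a_2$ by exploiting the explicit formula for $c_1(H_1)$, and then to rule out the leftover case $a_2=2$ by showing it would force $H_1\cong Q^{d_1}$. I will freely use Proposition \ref{Prop3}(1), which guarantees that $H_1$ is already a Fano manifold under our hypotheses.

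First I specialize Lemma \ref{Lem5} (equivalently Corollary \ref{Cor1}(1) at $i=1$), using $T(c_1(X))=d_1+2$ from Lemma \ref{Lem4}(1), to get
$$c_1(H_1)=\frac{d_1}{2}\,c_1(L_1)+T({\rm ch}_2(X)).$$
Since ${\rm ch}_2(X)\ge 0$, Lemma \ref{Lem4}(3) makes $T({\rm ch}_2(X))\ge 0$ on $H_1$. Pick any second minimal family $H_1\vdash H_2$ with a general curve $C_2$. Lemma \ref{Lem1}(1) gives $(-K_{H_1}\cdot C_2)=d_2+2$ and $d_2\le d_1-1$, so intersecting the identity above with $C_2$ produces
$$d_2+2 \;=\; \frac{d_1}{2}\,a_2 + T({\rm ch}_2(X))\cdot C_2 \;\ge\; \frac{d_1}{2}\,a_2.$$
Combined with $d_2+2\le d_1+1$, this yields $a_2\le 2(d_1+1)/d_1<3$ whenever $d_1\ge 5$, so $a_2\in\{1,2\}$.

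Next I aim for a contradiction from $a_2=2$. In that case the bound above gives $d_2\ge d_1-2$, while Lemma \ref{Lem1}(2) forbids $d_2=d_1-1$ because $H_1\not\cong\mathbb{P}^{d_1}$; hence $d_2=d_1-2$. Then the pseudo-index of $H_1$ is computed by the minimal family $H_2$, giving $i_{H_1}=d_2+2=d_1=\dim H_1$. Applying Wi\'sniewski's inequality (a Fano $n$-fold with $i_X>n/2+1$ must have $\rho_X=1$), we deduce $\rho_{H_1}=1$. Since $d_1\ge 5\ge 3$, Lemma \ref{Lem1}(3) then forces $H_1\cong Q^{d_1}$, contradicting the hypothesis. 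Therefore $a_2=1$.

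The main obstacle is the last step: from $i_{H_1}=\dim H_1$ I need $\rho_{H_1}=1$ in order to invoke Lemma \ref{Lem1}(3). Unlike in Proposition \ref{Prop3}(2), where the strictly weakly positive ${\rm ch}_2(X)$ bounds $\rho_{H_1}$ directly via the Araujo--Castravet result cited in the introduction, mere nefness of ${\rm ch}_2(X)$ gives no such bound. Thus the argument genuinely relies on an external Picard-rank input (Wi\'sniewski's inequality), and this is the delicate point of the proof; once that is in hand the rest is a straightforward combination of the Chern character formula with Lemma \ref{Lem1}.
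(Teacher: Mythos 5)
Your reduction of part (3) to the case $a_2=2$, $d_2=d_1-2$ is correct and agrees with the paper's, but the final step has a genuine gap: you assert that ``the pseudo-index of $H_1$ is computed by the minimal family $H_2$,'' i.e.\ $i_{H_1}=d_2+2$. The pseudo-index is the minimum of $(-K_{H_1}\cdot C)$ over \emph{all} rational curves $C$ on $H_1$, whereas $H_2$ only parametrizes curves of minimal degree \emph{through a general point}; non-free rational curves confined to a proper subvariety may have strictly smaller degree, so one only gets $i_{H_1}\le d_2+2$, not equality (compare ${\rm Bl}_{S_m}(\mathbb{P}^n)$ in Example \ref{Exam2}(3), where the minimal family through a general point has degree $m+2$ but the pseudo-index is small). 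The only lower bound your identity $c_1(H_1)=\frac{d_1}{2}c_1(L_1)+T({\rm ch}_2(X))$ actually yields is $(-K_{H_1}\cdot C)\ge\frac{d_1}{2}(L_1\cdot C)\ge\frac{d_1}{2}$, hence $i_{H_1}\ge\lceil d_1/2\rceil$; this falls short of Wi\'sniewski's threshold $i_{H_1}>\frac{1}{2}\dim H_1+1$, so that inequality cannot be invoked and $\rho_{H_1}=1$ is not established. The paper closes exactly this hole by a different route: it uses only the valid bound $i_{H_1}\ge 3$ (available since $d_1\ge 5$), Casagrande's theorem \cite{Ca} that $N_1(H_1)$ is then spanned by curves contained in any prime divisor, and the fact that ${\rm Locus}(H_2)$ has dimension $\ge d_1-1$ while all curves in it are numerically proportional \cite{ACO}; together these force $\rho_{H_1}=1$, and Lemma \ref{Lem1}(3) gives the contradiction.

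Two smaller points. Part (2) is not actually proved, and your aside mischaracterizes it: no Picard-number bound is needed there. When ${\rm ch}_2(X)>0$ one gets $T^2({\rm ch}_2(X))>0$ in the identity $d_2+2=\frac{d_1}{2}a_2+T^2({\rm ch}_2(X))$, so $a_2\ge 2$ would give $d_2>d_1-2$, hence $d_2=d_1-1$ and $H_1\cong\mathbb{P}^{d_1}$ directly from Lemma \ref{Lem1}(2). Part (1) is fine as written: it is immediate from your displayed formula together with Lemma \ref{Lem4}(3), even though you phrase it as an external input rather than proving it.
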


\begin{proof}
We assume ${\rm ch}_2(X) \ge 0$ and $d_1 \ge 1$. 
By Lemma \ref{Lem5} (or Corollary \ref{Cor1}), 
$$c_1(H_1)=\frac{d_1}{2}c_1(L_1)+T({\rm ch}_2(X)).$$
Hence, Lemma \ref{Lem4}(3) yields $c_1(H_1)>0$, so (1) holds. 
By applying Lemma \ref{Lem4}(1), 
$$d_2+2=T(c_1(H_1))=\frac{d_1}{2}a_2+T^2({\rm ch}_2(X)),$$
so if $a_2 \ge 2$, then $d_2 \ge d_1-2$. 
Furthermore, if ${\rm ch}_2(X) > 0$, then Lemma \ref{Lem4}(3) yields $d_2 >d_1-2$, so $H_1 \cong \mathbb{P}^{d_1}$ by Lemma \ref{Lem1}. 
Thus, (2) holds. 

Next, to show (3), we assume $d_1 \ge 5$. 
According to Lemma \ref{Lem1}, we only have to show that $\rho_{H_1}=1$ under the assumption $a_2 \ge 2$. 
It follows from $d_2 \ge d_1-2$ that ${\rm Locus}(H_2):= e( \pi^{-1}(H_2))$ (see Definition \ref{Def2}) has dimension at least $d_1-1$ in $H_1$. 
Since $H_2$ is a minimal family of rational curves on $H_1$ through a general point, every curve contained in ${\rm Locus}(H_2)$ is numerically proportional to a curve parametrized by $H_2$ (see \cite[Lemma 4.1]{ACO}). 
On the other hand, for any rational curve $C$ on $H_1$, 
$$(-K_{H_1}\cdot C) \ge \frac{d_1}{2}(L_1\cdot C) \ge  \frac{5}{2},$$
so $i_{H_1}\ge 3$. 
Hence, by \cite[Theorem 1.2]{Ca}, for every prime divisor $D \subset H_1$, $N_1(H_1)$ is spanned by numerical classes of curves contained in $D$. 
Thus, we obtain $\rho_{H_1}=1$. 
\end{proof}

From the computation in Remark \ref{Rmk2}, we naturally conjecture the following statement: 

\begin{conj}\label{Conj1}
For $i\ge 1$, $j =1,2$, and $1 \le k\le i+j$, $b_{(i,j,k)} > 0$ holds. 
\end{conj}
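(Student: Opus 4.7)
My plan is to extend the explicit-formula approach of Proposition~\ref{Prop2} --- which gives closed forms for $b_{(i,j,0)}$ and $b_{(i,j,1)}$ --- to all admissible $k$. The case $k=1$ is immediate: Proposition~\ref{Prop2}(2) yields $b_{(i,1,1)}=1/(i+1)$ and $b_{(i,2,1)}=i/\bigl(2(i+1)(i+2)\bigr)$, both strictly positive for $i\ge 1$. The real content of the conjecture therefore lies in the range $2\le k\le i+j$.

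For $k\ge 2$, I would study the generating function $U_i(t,k):=\sum_{j\ge 1}b_{(i,j,k)}\,t^j$. Summing the defining recursion over $j$ and using $t/(1-e^{-t})=\sum_m\frac{(-1)^m B_m}{m!}t^m$ yields the clean relation $U_i(t,k)=U_{i-1}(t,k)/(1-e^{-t})-b_{(i-1,1,k)}$ for $k\le i$, and the simpler $U_i(t,k)=U_{i-1}(t,k)/(1-e^{-t})$ (no subtraction) for $1\le i\le k-1$. With $U_1(t,k)=t^k/(1-e^{-t})$ and the identity $b_{(k-1,1,k)}=1$ (immediate from the recursion), iteration gives $U_i(t,k)=t^k/(1-e^{-t})^i$ for $i\le k-1$, and
\[
U_i(t,k)\;=\;\frac{t^k}{(1-e^{-t})^i}\;-\;\sum_{\ell=k-1}^{i-1}\frac{b_{(\ell,1,k)}}{(1-e^{-t})^{i-1-\ell}}
\]
for $i\ge k$. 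Extracting the coefficients of $t$ and $t^2$ via $[t^j]\bigl(t^k/(1-e^{-t})^i\bigr)=[t^{j+i-k}]\bigl(t/(1-e^{-t})\bigr)^i$ produces explicit (but implicit in the $b_{(\ell,1,k)}$) formulas for $b_{(i,1,k)}$ and $b_{(i,2,k)}$, which I would feed into an induction on $i$: positivity of the correction terms is supplied by the inductive hypothesis, reducing the problem to dominance of the main term.

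The critical obstacle is precisely this dominance. Since $B_{2m}/(2m)!$ alternates in sign starting at $B_4/4!=-1/720$, the main term $[t^{j+i-k}]\bigl(t/(1-e^{-t})\bigr)^i$ is an a priori signed polynomial in Bernoulli-related constants, and no term-by-term estimate will suffice. My expected route is to reinterpret this coefficient combinatorially --- for instance via a Worpitzky-type identity or a Stirling-number expansion of $(1-e^{-t})^{-i}$ --- so that positivity in the low-degree range relevant here becomes manifest, and then to match it against a degree-wise upper bound on the correction produced by the inductive hypothesis. The restriction $j\in\{1,2\}$ is essential: Table~\ref{Table4} already exhibits negative values $b_{(i,4,1)}$, so any successful argument must genuinely exploit the low-degree regime rather than seek a general statement across all $j$.
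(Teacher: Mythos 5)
This statement is a \emph{conjecture} in the paper, not a theorem: the author establishes only the case $k=1$ (via Proposition~\ref{Prop2}(2)) and otherwise reports a computer verification for $i<100$ (Remark~\ref{Rmk4}) --- which is precisely why Theorem~\ref{Thm1} carries the artificial hypothesis $N\le 100$. So there is no proof in the paper to compare yours against; the only question is whether your argument actually closes the problem, and it does not. Your generating-function bookkeeping is sound: for $k\ge 1$ the defining recursion does translate into $U_i(t,k)=U_{i-1}(t,k)/(1-e^{-t})-b_{(i-1,1,k)}$ for $i\ge k$ (and without the subtraction for $i<k$), the initial condition $U_1(t,k)=t^k/(1-e^{-t})$ and the identity $b_{(k-1,1,k)}=1$ check out, and the iterated formula you display is correct. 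This reproduces, in closed form, the data the author computed numerically, but it proves nothing about signs by itself.

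The gap is exactly the step you label ``the critical obstacle,'' and it is the entire content of the conjecture. You must show that the coefficient of $t^j$ in $t^k(1-e^{-t})^{-i}$ dominates the subtracted correction, and you offer only an ``expected route'' (a Worpitzky-- or Stirling--type reinterpretation) without carrying it out or supplying any bound. Worse, the reduction is not of the form ``positive main term minus positive correction'': although the $b_{(\ell,1,k)}$ in the correction are positive by induction, they are multiplied by the coefficients of $t^j$ in $(1-e^{-t})^{-(i-1-\ell)}$, which are themselves signed (they are governed by the same alternating Bernoulli data as the main term), so the comparison is between two signed expressions and no term-by-term estimate is available --- as your own observation about the negative entries $b_{(i,4,1)}$ in Table~\ref{Table4} shows, any such estimate must be delicate enough to fail for $j\ge 3$. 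As written, the proposal is a research plan with a correct reformulation of the problem, not a proof; the conjecture remains open.
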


\begin{rmk}\label{Rmk4}
Conjecture \ref{Conj1} holds for $k=1$ by Proposition \ref{Prop2}. 
However, it seems difficult to prove it for general $k$. 
The author has checked Conjecture \ref{Conj1} for $i <100$ by using a computer. 
\end{rmk}

\section{Proof of Theorem}

In this section, we prove Theorem \ref{Thm1}. 
By Remark \ref{Rmk4}, it suffices to prove the following theorem:

\begin{thm}\label{Thm2}
Let $N \ge 2$ be an integer such that Conjecture \ref{Conj1} holds for $i < N$. 
Let $X$ be a Fano manifold with nef Chern characters ${\rm ch}_2(X), \ldots , {\rm ch}_N(X)$. 
Suppose that there is a family $X \vdash H_1$ of dimension at least $N^2-N-1$. 
\begin{enumerate}
\item Then $\underline{N}_{H_1} \ge N-1$ holds. 
\item Furthermore, if $H_1 \not\cong \mathbb{P}^{d}, Q^d$ and there are families $H_1 \vdash H_2 \vdash \cdots \vdash H_{N-1}$ such that $H_2, \ldots , H_{N-1} \not\cong \mathbb{P}^{d}$, then $X$ is covered by rational $N$-folds. 
\end{enumerate}
\end{thm}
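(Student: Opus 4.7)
The plan is to prove Theorem \ref{Thm2} by induction along the chain, using Corollary \ref{Cor1} to compute the first Chern character of $H_i$ together with Proposition \ref{Prop3} to secure step-by-step the identity $a_j = 1$, in the spirit of Araujo--Castravet \cite{AC1}.

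For Part (1), I would show by induction on $i$ that in any chain $H_1 \vdash \cdots \vdash H_i$ with $i \le N-1$ the manifold $H_i$ is Fano. The base case $i=1$ is Proposition \ref{Prop3}(1). For the inductive step I split into two cases. If some earlier $H_j$ ($j \le i-1$) is already $\mathbb{P}^{d_j}$ or $Q^{d_j}$, then by Example \ref{Exam2} the manifold $H_i$ is itself a projective space or quadric (of positive dimension, since by assumption the chain extends to $H_i$), hence Fano. Otherwise, I iteratively establish $a_2 = \cdots = a_i = 1$: for $a_2 = 1$, apply Proposition \ref{Prop3}(3) using ${\rm ch}_2(X)\ge 0$, $d_1 \ge 5$, and $H_1 \not\cong \mathbb{P}^{d_1}, Q^{d_1}$; for $a_j = 1$ with $j \ge 3$, apply Proposition \ref{Prop3}(2), whose hypotheses are verified below.

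The strict positivity ${\rm ch}_2(H_{j-2}) > 0$ follows from Corollary \ref{Cor1}: its leading coefficient $-\tfrac{j-2}{2} + \tfrac{(j-2)(d_1+2)}{2(j-1)j}$ is positive whenever $d_1+2 > (j-1)j$, which holds for $j \le N$ under $d_1 \ge N^2 - N - 1$. The remaining summands $b_{(j-2,2,k)}\alpha_{(j-2,2,k)}$ with $k \ge 1$ are nef since $b_{(j-2,2,k)} > 0$ by Conjecture \ref{Conj1}, ${\rm ch}_k(X)$ is nef for $k \le N$, the operator $T$ preserves nefness by Lemma \ref{Lem4}(3), and products with the ample class $c_1(L_i)$ preserve nefness. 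The bound $d_{j-1} \ge 1$ comes from the estimate $d_{j-1} \ge -j + (d_1+2)/(j-1)$, obtained by applying $T$ and Lemma \ref{Lem4}(1) to the formula for $c_1(H_{j-2})$ in Corollary \ref{Cor1} under $a_{j-1}=1$. Once $a_2 = \cdots = a_i = 1$ is secured, Corollary \ref{Cor1} gives
$$c_1(H_i) = \left\{-i + \frac{d_1+2}{i+1}\right\}c_1(L_i) + \sum_{k=2}^{i+1} b_{(i,1,k)}\alpha_{(i,1,k)},$$
in which the summands are nef for the same reasons, and the leading coefficient is strictly positive for $i \le N-1$ because $d_1 + 2 \ge N^2-N+1 > (N-1)N \ge i(i+1)$. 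Hence $c_1(H_i) > 0$, so $H_i$ is Fano.

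For Part (2), Part (1) supplies a chain $H_1 \vdash \cdots \vdash H_{N-1}$ with every $H_j$ Fano. Since no $H_j$ ($j \le N-1$) is a projective space and $H_1$ is not a quadric, the iterative argument of Part (1) applies throughout, yielding $a_2 = \cdots = a_{N-1} = 1$; running the same machinery once more at $j = N$ (with ${\rm ch}_2(H_{N-2}) > 0$, $d_{N-1} \ge 1$ from the same estimates, and $H_{N-1} \not\cong \mathbb{P}^{d_{N-1}}$ by hypothesis) yields $a_N = 1$ for every minimal family $H_{N-1} \vdash H_N$. By Lemma \ref{Lem3} applied to $(X, H_1)$, to conclude that $X$ is covered by rational $N$-folds it suffices to exhibit a subvariety $Z \subset H_1^{\circ}$ with $(Z, L_1|_Z) \cong (\mathbb{P}^{N-1}, \mathscr{O}_{\mathbb{P}^{N-1}}(1))$. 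I would build $Z$ inductively: a general curve parametrized by $H_N$ gives $(\mathbb{P}^1,\mathscr{O}(1))\subset H_{N-1}^{\circ}$ since $a_N=1$, and Lemma \ref{Lem3} applied at each level $(H_{i-1},H_i)$ is intended to lift a $(\mathbb{P}^k,\mathscr{O}(1)) \subset H_i^{\circ}$ to a $(\mathbb{P}^{k+1},\mathscr{O}(1)) \subset H_{i-1}^{\circ}$.

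The main obstacle is precisely this iteration: Lemma \ref{Lem3} only produces a generically injective morphism $\mathbb{P}^{k+1} \to H_{i-1}$, not a closed embedding with the correct polarization, so iterating it requires additional control ensuring that each lifted image is an honest subvariety isomorphic to $(\mathbb{P}^{k+1},\mathscr{O}(1))$. I expect this to be handled by combining the assumption $H_j \not\cong \mathbb{P}^{d_j}$ (which via Proposition \ref{Prop3} rigidifies the tangent-map geometry and forces the curves at each level to be lines of $L_j$-degree $1$) with the positivity of $c_1(H_j)$ and ${\rm ch}_2(H_j)$ secured above, in direct analogy with the $N=3$ construction of \cite{AC1}.
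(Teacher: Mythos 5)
Your part (1) is essentially the paper's argument: Proposition \ref{Prop3}(3) gives $a_2=1$, the simultaneous induction ``$a_j=1$ and ${\rm ch}_2(H_{j-2})>0$'' runs on Proposition \ref{Prop3}(2) and Corollary \ref{Cor1}(2), and $c_1(H_i)>0$ follows from Corollary \ref{Cor1}(1); your numerical estimates all check out. One step is not right as stated: in the degenerate case where some $H_j\cong\mathbb{P}^{d_j}$ or $Q^{d_j}$, you claim $H_i$ has positive dimension ``since the chain extends to $H_i$.'' The existence of the chain only forces $H_{i-1}$ to be positive-dimensional; $H_i$ itself could a priori be a point, which by the convention implicit in Example \ref{Exam3}(1) counts as ``not a Fano manifold,'' so positive dimensionality of the terminal member must actually be proved. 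The paper closes this with the quantitative bound $d_j=T(c_1(H_{j-1}))-2\ge -(j-1)+\frac{d_1+2}{j}-2>M-j$, available because $a_2=\cdots=a_j=1$ is already known up to the first projective-space member; you have all the ingredients, so this is a fixable oversight. (Note also that the paper disposes of $H_1\cong Q^d$ at the very start via Example \ref{Exam3}(2), since Proposition \ref{Prop3}(3) excludes quadrics.)

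The genuine gap is the one you flag yourself in part (2): iterating Lemma \ref{Lem3} when it only yields a generically injective morphism $f_{i-1}:(\mathbb{P}^{k+1},p)\to(H_{i-2},x_{i-2})$ rather than a subvariety $(\mathbb{P}^{k+1},\mathscr{O}(1))\subset H_{i-2}^{\circ}$. The paper's fix is short and you should supply it. Since $f_{i-1}$ maps lines through $p$ birationally to curves parametrized by $H_{i-1}$, and $a_{i-1}=1$ means those curves have $L_{i-2}$-degree one, the composition with the tangent map $\tau_{i-2}:H_{i-2}\to P_{i-3}$ satisfies $(\tau_{i-2}\circ f_{i-1})^{*}\mathscr{O}(1)\cong\mathscr{O}_{\mathbb{P}^{k+1}}(1)$; a morphism from $\mathbb{P}^{k+1}$ to a projective space pulling back $\mathscr{O}(1)$ to $\mathscr{O}(1)$ is a linear embedding onto a $(k+1)$-plane, and since $\tau_{i-2}$ is finite and birational onto its image, $f_{i-1}(\mathbb{P}^{k+1})$ really is a subvariety with $(f_{i-1}(\mathbb{P}^{k+1}),L_{i-2}|)\cong(\mathbb{P}^{k+1},\mathscr{O}(1))$. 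Two further points you omit are also needed: (i) the lifted image must lie in $H_{i-2}^{\circ}$ to reapply Lemma \ref{Lem3}, which holds automatically at all intermediate levels because $a_i=1$ forces every curve parametrized by $H_i$ (for $i\ge 2$) to map to a line under the finite birational tangent map, hence to be smooth, so $H_i^{\circ}=H_i$; and (ii) at the last level $H_1^{\circ}$ may be a proper subset, and the paper rules out $R_h\not\subset H_1^{\circ}$ for general $h$ by observing that $H_1\setminus H_1^{\circ}$ is finite, so otherwise $H_1$ would be covered by $L_1$-lines through a fixed point and would be isomorphic to $\mathbb{P}^{d_1}$, contradicting the hypothesis.
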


\begin{proof}[Proof of (1)]
We have already showed the case $N=2$ in Proposition \ref{Prop3}(1), so we may assume $N \ge 3$. 
First, if $H_1\cong Q^d$, then Example \ref{Exam3}(2) implies: 
$$\underline{N}_{H_1} \ge \frac{d}{2} \ge \frac{N^2-N-1}{2} > N-1.$$
Thus, we may assume $H_1 \not\cong  Q^d$. 

Suppose that there is a chain of families $H_1 \vdash H_2 \vdash \cdots \vdash H_M$ for some integer $2 \le M < N$, then we prove that $H_M$ is a Fano manifold of positive dimension. 
Let $d_1, \ldots , d_M$ and $a_2, \ldots, a_M$ be as in Notation \ref{Ntt10}. 
Since $M < N$, we have $d_1 \ge (M+1)^2-(M+1)-1=M^2+M-1$. 
In connection with Corollary \ref{Cor1}, we know: 
\begin{eqnarray*}
\sum_{k=2}^{i+1}b_{(i,1,k)}\alpha_{(i,1,k)} \ge 0 &\text{ for}&1 \le i \le M,\\
\sum_{k=2}^{i+2}b_{(i,2,k)}\alpha_{(i,2,k)} \ge 0 &\text{ for}&1 \le i \le M-1,
\end{eqnarray*}
from the assumptions and Lemma \ref{Lem4}(3).

\begin{case}
$H_1, \ldots , H_M \not\cong \mathbb{P}^{d}$. 
\end{case}

First, we show $a_i=1$ and ${\rm ch}_2(H_{i-1}) >0$ for every $2 \le i \le M$ by induction on $i$. 
Since $d_1 \ge 3^2-3-1 \ge 5$, Proposition \ref{Prop3}(3) implies $a_2=1$, and we have: 
$${\rm ch}_2(H_1) \ge \biggl\{-\frac{1}{2}+\frac{d_1+2}{12}\biggr\}c_1(L_1)^2 \ge \frac{1}{12} c_1(L_1)^2>0.$$
So, we suppose $3 \le i \le M$. 
Then $a_i=1$ follows from the inductive hypothesis ${\rm ch}_2(H_{i-2}) >0$ and Proposition \ref{Prop3}(2). 
In addition, the inductive hypothesis $a_2= \cdots =a_{i-1}=1$ and Corollary \ref{Cor1}(2) yield: 
\begin{eqnarray*}
{\rm ch}_2(H_{i-1}) &\ge& \biggl\{-\frac{i-1}{2}+\frac{(i-1)(d_1+2)}{2i(i+1)}\biggr\}c_1(L_{i-1})^2 \\
&\ge& \frac{i-1}{2} \biggl\{ -1+ \frac{M^2+M+1}{i^2+i} \biggr\} c_1(L_{i-1})^2>0.
\end{eqnarray*}

Next, we show $d_M \ge1$ and $c_1(H_M)>0$. 
Corollary \ref{Cor1}(1) and Lemma \ref{Lem4}(1) imply: 
$$d_M=T(c_1(H_{M-1})) -2 \ge -(M-1)+\frac{d_1+2}{M} -2 \ge \frac{1}{M} >0.$$
Finally, by Corollary \ref{Cor1}(1) again, we obtain:
$$c_1(H_M) \ge \biggl\{-M+\frac{d_1+2}{M+1}\biggr\}c_1(L_M) \ge \frac{1}{M+1}c_1(L_M) >0.$$

Since ${\rm ch}_2(H_{M-1}) >0$, we also obtain $a_{M+1}=1$ for every $(M+1)$-th family $H_M \vdash H_{M+1}$ by Proposition \ref{Prop3}(2). 
We will use this fact in the proof of Case 2 and Theorem \ref{Thm2}(2). 

\begin{case}
$H_1, \ldots , H_{i-1} \not\cong \mathbb{P}^{d}$, and $H_i \cong \mathbb{P}^{d_i}$ for some $1 \le i \le M$.
\end{case}

In this case, by the proof of Case 1, we know $a_2=\cdots =a_i=1$, so Corollary \ref{Cor1}(1) and Lemma \ref{Lem4} yield: 
$$d_i=T(c_1(H_{i-1}))-2 \ge -(i-1)+\frac{d_1+2}{i}-2\ge \frac{M^2}{i}-i+\frac{M+1}{i} -1> M-i.$$
Therefore, by Example \ref{Exam2}(1), $H_M$ is a projective space of positive dimension, in particular, it is a Fano manifold. 
\end{proof}

\begin{proof}[Proof of (2)]
We can take an $N$-th family $H_{N-1} \vdash H_N$ by (1). 
We denote by $\tau_i :H_i \rightarrow P_{i-1}:=\mathbb{P}(T_{x_{i-1}} {H_{i-1}}^{\vee})$ the tangent map, where $x_{i-1}$ is a general point of $H_{i-1}$. 
Let $a_2, \ldots, a_N$ be as in Notation \ref{Ntt10}. 
By the proof of Case 1 in (1), we know $a_2=\cdots =a_N=1$. 
Hence, for $2 \le i \le N$, every curve $C$ parametrized by $H_i$ is a smooth rational curve on $H_{i-1}$. 
Indeed, $\tau_{i-1} (C)$ is a line on $P_{i-2}$. 
In particular, ${H_i}^{\circ} =H_i$ for $2 \le i \le N$ (see Definition \ref{Def3}).

First, we take a rational curve $C$ parametrized by $H_N$. 
As shown, $a_N=1$ implies $(C, L_{N-1}|_C)\cong(\mathbb{P}^1, \mathscr{O}_{\mathbb{P}^1}(1))$ in $H_{N-1}={H_{N-1}}^{\circ} $ (when $N \ge 3$). 
Hence, Lemma \ref{Lem3} induces a generically injective morphism $f_{N-1}: (\mathbb{P}^2,p) \rightarrow (H_{N-2},x_{N-2})$ that maps lines through $p$ birationally to curves parametrized by $H_{N-1}$. 

Next, $a_{N-1}=1$ yields $(\tau_{N-2}\circ f_{N-1})^*\mathscr{O}_{P_{N-3}}(1) \cong \mathscr{O}_{\mathbb{P}^2}(1)$, so the image of $\tau_{N-2}\circ f_{N-1}$ is a $2$-plane in $P_{N-3}$. 
Thus, we have $(f_{N-1}(\mathbb{P}^2), L_{N-2}|_{f_{N-1}(\mathbb{P}^2)}) \cong (\mathbb{P}^2, \mathscr{O}_{\mathbb{P}^2}(1))$ in $H_{N-2}={H_{N-2}}^{\circ}$ (when $N \ge 4$). 
By Lemma \ref{Lem3} again, we get a generically injective morphism $f_{N-2}: (\mathbb{P}^3,p) \rightarrow (H_{N-3},x_{N-3})$ that maps lines through $p$ birationally to curves parametrized by $H_{N-2}$. 

Recall that ${H_i}^{\circ}=H_i$ for $i \ge 2$. 
After the same steps, we finally obtain a generically injective morphism $f_2: (\mathbb{P}^{N-1},p) \rightarrow (H_1,x_1)$ that maps lines through $p$ birationally to curves parametrized by $H_2$, and $(f_2(\mathbb{P}^{N-1}), L_1|_{f_2(\mathbb{P}^{N-1})}) \cong (\mathbb{P}^{N-1}, \mathscr{O}_{\mathbb{P}^{N-1}}(1))$ in $H_1$. 
In other wards, for a general point $h \in H_1$, we get a subvariety $R_h \subset H_1$ such that $h \in R_h$ and $(R_h, L_1|_{R_h}) \cong (\mathbb{P}^{N-1}, \mathscr{O}_{\mathbb{P}^{N-1}}(1))$

To apply Lemma \ref{Lem3}, we show that $R_h \subset {H_1}^{\circ}$ for some $h \in H_1$. 
We assume by contradiction that $R_h \not\subset {H_1}^{\circ}$ for a general point $h \in H_1$. 
Then, since $H_1 \backslash {H_1}^{\circ}$ is finite, there is a point $h_0 \in H_1 \backslash {H_1}^{\circ}$ such that $h_0 \in R_h$ for a general point $h \in H_1$. 
Since $a_2=1$, this implies that $H_1$ is covered by rational curves of $L_1$-degree $1$ through $h_0$, so $(H_1,L_1)$ must be isomorphic to $(\mathbb{P}^{d_1}, \mathscr{O}_{\mathbb{P}^{d_1}}(1))$. 

Therefore, Lemma \ref{Lem3} gives a generically injective morphism $f_1: (\mathbb{P}^N,p) \rightarrow (X,x)$ for a general point $x \in X$. 
Thus, we conclude that $X$ is covered by rational $N$-folds. 
\end{proof}

\end{document}